\title{On gaps between sums of four fourth powers}
\author{Luca Ghidelli}
\date{\today}
\address{150 Louis-Pasteur Private, Office 608, Department of Mathematics and Statistics, University of Ottawa, Ottawa ON K1N 9A7, Canada}
\email{{luca.ghidelli@uottawa.ca}}
\subjclass[2010]{Primary 11P05, 11P55; Secondary 11B05}
\renewcommand{\c}{\tfrac 1 2}
\newcommand{\cc}{\tfrac 1 {16}}
\newcommand{\cb}{\tfrac 1 {8}}
\newcommand{\ch}{32}
\newcommand{\gcdaq}{a\in(\Z/q\Z)^\ast}
\renewcommand{\a}{\alpha}
\renewcommand{\b}{\beta}
\newcommand{\lls}{\ll_\epsilon P^\epsilon}
\newcommand{\norm}[1]{{\left\Vert #1 \right \Vert}}
\newcommand{\mm}{\mfk m}
\newcommand{\MM}{\mfk M}
\newcommand{\NN}{\mfk N}
\newcommand{\RR}{\bar R}
\newcommand{\ER}{E_R}
\newcommand{\ES}{E_S}
\newcommand{\Em}{E_\mm}
\newcommand{\EM}{E_\MM}
\newcommand{\EN}{E_\NN}
\newcommand{\TM}[1]{T_\MM ^{(#1)}}
\newcommand{\Tm}[1]{T_\mm ^{(#1)}}
\newcommand{\TN}[1]{T_\NN ^{(#1)}}
\newcommand{\T}[1]{T^{(#1)}}
\renewcommand{\S}[1]{S^{(#1)}}
\newcommand{\Mi}[1]{\MM^{(#1)}}
\newcommand{\Ni}[1]{\NN^{(#1)}}
\newcommand{\mi}[1]{\mm^{(#1)}}
\renewcommand{\AA}[1]{\mfk B^{(#1)}}
\newcommand{\BB}{\mfk B}
\newcommand{\TT}{{\R/\Z}}
\renewcommand{\PP}{\mbf P}
\renewcommand{\P}{\mbf P}
\begin{document}
	
\begin{abstract}
	We prove that for almost all $N$ there is a sum of four fourth powers in the interval $(N-N^\gamma,N]$, for all $\gamma>4059/16384=0.24774..$.
\end{abstract}
\maketitle

\tableofcontents

\section{Introduction}\label{sec:intro}

For every $n\in \N$ there is some natural number $x<n^{1/4}$ such that $n-x^4 = O(x^3) = O(n^{3/4})$. 
If we repeat this procedure we find that for all $n\in\N$ there exist $x_1, x_2,x_3,x_4\in\N$ such that $x_1^4+\dots +x_4^4 = n+O(n^{\gamma})$ with $\gamma = (3/4)^4\approx 0.3164$. 
In this paper we show that the exponent $\gamma$ can be reduced if we require the above statement to hold only for \emph{almost all $n\in\N$}. 
This is motivated by a forthcoming article of the author \cite{ghidelli:theta}, in which we study arithmetic properties of special values of ``cubic'' and ``biquadratic'' theta series. 
In fact, the arguments of that paper require that 
almost all intervals of the form $(n-n^\gamma,n]$, for some $\gamma<0.25$, contain a sum of four fourth powers. 
Using the circle method, Daniel \cite{daniel} studied a similar problem in regard to sums of three cubes. 
Following his approach we are able to prove the following statement.  
\begin{theorem}\label{cor:main} 
	Define $\gamma_0:=4059/16384\approx 0.24774$ and let $\gamma>\gamma_0$. 
	Then for almost all $n\in\N$ (in the sense of natural density) there is a sum of four fourth powers in the interval $(n-n^\gamma,n]$.
\end{theorem}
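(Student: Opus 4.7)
The strategy is the Hardy--Littlewood circle method equipped with a short-interval detector, combined with a second-moment (variance) argument to pass from a pointwise-on-average lower bound to an almost-all statement. Set $P := N^{1/4}$ and $H := N^{\gamma}$, write $f(\alpha) := \sum_{x \leq P} e(\alpha x^{4})$ for the fourth-power generating function, and let $K(\alpha) := \sum_{1 \leq h \leq H} e(\alpha h)$ be a short-interval kernel. Then
$$\nu(n) := \int_{0}^{1} f(\alpha)^{4}\, K(\alpha)\, e(-\alpha n)\, d\alpha$$
counts the quintuples $(h; x_{1},\dots,x_{4})$ with $1 \leq h \leq H$, $x_{i} \leq P$, and $x_{1}^{4}+\cdots+x_{4}^{4} = n-h$; in particular $\nu(n)>0$ forces a sum of four fourth powers to lie in $[n-H, n-1]$. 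It therefore suffices to show $\nu(n) > 0$ for all but $o(N)$ integers $n \in (N/2,N]$.

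Next I would dissect $[0,1]$ into major arcs $\mathfrak{M}$ around rationals $a/q$ with $q \leq Q$, for a suitably chosen parameter $Q$, and minor arcs $\mathfrak{m}$. On $\mathfrak{M}$ a standard singular-series/singular-integral calculation produces a main term $M(n) = \mathfrak{S}(n)\mathfrak{J}(n)\, H$ with $\mathfrak{S}(n)\mathfrak{J}(n) \gg 1$ uniformly in $n$, reflecting the absence of any congruence obstruction to four fourth powers in the relevant range; hence $\sum_{N/2<n\leq N}\nu(n) \gg HN$. By Chebyshev's inequality it remains to establish the variance bound
$$V := \sum_{N/2 < n \leq N} \bigl|\nu(n) - M(n)\bigr|^{2} = o(H^{2} N),$$
and, after expanding $|\nu-M|^{2}$ and invoking Parseval, this reduces essentially to the minor-arc mean value
$$\int_{\mathfrak{m}} |f(\alpha)|^{8}\, |K(\alpha)|^{2}\, d\alpha.$$

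The main obstacle, and the very source of the numerical exponent $\gamma_{0} = 4059/16384$, is the estimation of this minor-arc integral. I would partition $\mathfrak{m}$ further according to the quality of the rational approximation to $\alpha$ and, in each piece, interpolate between (i) pointwise Weyl-type bounds on $f(\alpha)$ away from rationals of small denominator, (ii) Hua's eighth-moment bound $\int_{0}^{1} |f|^{8}\, d\alpha \ll_{\epsilon} P^{5+\epsilon}$, sharpened where possible by Vaughan's iterated mean-value methods for fourth powers, and (iii) the decay $|K(\alpha)| \ll \min(H, \|\alpha\|^{-1})$ of the short-interval kernel. The precise value $\gamma_{0}$ should fall out of optimizing $Q$ together with the subarc partition so that each piece contributes $o(H^{2} N)$ to $V$; the interlocking conditions among the various dyadic regimes are what makes this bookkeeping delicate. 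Once the variance bound is achieved, it combines with $M(n) \gg H$ to give $\nu(n) > 0$ for almost all $n \in (N/2, N]$, and a standard dyadic summation over $N$ yields Theorem~\ref{cor:main}.
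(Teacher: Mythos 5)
Your proposal captures the general flavor of a circle-method/variance argument, and the reduction of the theorem to a second-moment estimate is indeed how the paper proceeds (cf.\ \cref{thm:main}).  However, there is a decisive ingredient missing: the paper does \emph{not} use the single Weyl sum $f(\alpha)=\sum_{x\le P}e(\alpha x^4)$ raised to the fourth power.  Instead it uses Hardy--Littlewood's \emph{diminishing ranges}: the variables are confined to $\tfrac12 P_j<x_j\le P_j$ with $P_{j+1}=P_j^{13/16}$, so the generating function is a product $f_1f_2f_3f_4$ of Weyl sums over progressively shorter intervals.  This is not an optimization detail but the source of the exponent: $\gamma_0(4,4)=1-\tfrac14(1+\theta_4+\theta_4^2+\theta_4^3)$ with $\theta_4=13/16$, and $\theta_4$ arises precisely from balancing the loss from shrinking the ranges against the gain from Weyl-differencing the longest variable.

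With equal ranges your minor-arc integral $\int_{\mfk m}|f|^8|K|^2\,d\a$ cannot be driven below the threshold.  The eighth moment $\int_0^1|f|^8$ is $\gg P^{4+\delta}$ for some $\delta>0$ (there are super-diagonal solutions of $x_1^4+x_2^4+x_3^4+x_4^4=x_5^4+\cdots+x_8^4$ coming from the two-variable identity); even taking Hua's $\ll P^{5+\epsilon}$ as an upper bound, the $m=0$ (zero-shift) contribution to $\int|f|^8|K|^2$ already gives $\gg H\int|f|^8$, which dwarfs the target $o(H^2 N)=o(H^2 P^4)$ unless $H\gg P^{1+\epsilon}$, i.e.\ $\gamma>1/4$.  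But $\gamma_0=4059/16384<1/4=4096/16384$, so equal ranges structurally cannot reach the stated exponent; no amount of sub-arc bookkeeping rescues this, because the obstruction is a genuine near-diagonal count, not an inefficiency in the dissection.

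The paper's route around this is also structurally different from your sketch in several respects that deserve to be named.  After Bessel's inequality gives $\sum_n|R_1(n)|^2\le\int_{\AA 1 _1}|f_1f_2f_3f_4\,g|^2$, the quantity $|f_1|^2$ is replaced, via the substitution $x_1'=x_1+h$ and the diminished range of $x_2,\dots,x_4$, by the short exponential sum $H_1$ of \cref{def:H} in only $O(P_1^{-2}P_2^4)=O(P^{5/4})$ terms; this Weyl-differencing step is where the $\theta_4=13/16$ saving enters.  The remaining integral is then treated by induction on the number of variables ($S^{(4)}\to S^{(3)}\to S^{(2)}\to T_1$, \cref{scheme:ST}), and a classical major/minor-arc dissection (\cref{eq:major}) is applied only at the final stage, to the \emph{differenced} integrals $T^{(j)}$.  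Crucially, no singular series $\mathfrak S(n)$ appears: because only a single central arc at the origin is used in the preliminary stages, the expected value $\RR(n)$ is the explicit elementary expression \eqref{def:RR}, not $\mathfrak S(n)\mathfrak J(n)H$.  (Incidentally, for four fourth powers the singular series is \emph{not} uniformly $\gg1$ --- $x^4\equiv 0,1\pmod{16}$ excludes many residue classes --- so the statement ``$\mathfrak S(n)\mathfrak J(n)\gg1$ uniformly'' as you wrote it would be false; what saves the day is that the shift $y\in(0,Y]$ averages over residue classes, which is exactly what the elementary count $\RR(n)$ encodes.)  In short, your proposal is a reasonable sketch of a generic short-interval circle-method argument, but the diminishing-ranges structure and the Weyl-differencing/induction scheme are what make the exponent $4059/16384$ attainable, and these are absent from your plan.
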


To put this theorem in perspective, we now survey the relevant literature on sums of four fourth powers and sums of three cubes. 
First, we know from a paper of Davenport \cite{davenport:biquadrates} that there are $\gg N^{\alpha_4}$ distinct sums of four fourth powers up to $N$, for $\alpha_4:=331/412\approx 0.803398$: this means that the average gap between sums of fourth powers is at most of order $\ll N^{1-\alpha_4}\approx N^{0.197}$. 
However, Davenport's result does not measure how uniformly the sums of four fourth powers distribute on the number line, so  it does not imply that almost all gaps have at most this size. 
In fact some probabilistic models \cite{deshouillers:random,erdos:random} suggest that the sums of four fourth powers, and more generally sums of $k$ perfect $k$-th powers for $k\geq 3$, should have positive natural density. 
In particular the gaps between these numbers are conjectured to have bounded average size. 
However, previous work of the author \cite{ghidelli:gaps} shows that there do exist arbitrarily large gaps between numbers that can be written as sums of four fourth powers. In fact we also showed that a positive proportion of the intervals $(n-\psi(n),n]$ does not contains sums of fourth powers, if $\psi(n)$ grows to infinity sufficiently slowly. 
If we trust the probabilistic models, 
we should in fact expect this last statement to hold for $\psi(n)\asymp \log n / \log \log n$. 

The situation for sums of three cubes is similar, and has been considered more extensively in the literature. 
A ``greedy argument'' as the one in the opening of this introduction shows that for all $n\in\N$ there exist $x_1,x_2,x_3\in\N$ such that $x_1^3+x_2^3+x_3^3 = n+ O(n^{\gamma})$, with $\gamma = 8/27\approx 0.296$. 
The aforementioned paper of Daniel \cite{daniel} proves instead that almost all gaps between sums of three cubes up to $N$ have length $O(N^\gamma)$, for all $\gamma>17/108\approx 0.1574$. 
For the number of sums of three cubes up to $N$, the current record is due to Wooley \cite{wooley:cubes}, who proves that there are $\gg N^{\alpha_3}$ of them, with $\alpha_3\approx 0.916862$; this means that on average the gaps between them have order $\ll N^{1-\alpha_3}\approx N^{0.083}$.  
As we wrote above, it is expected on the basis of probabilistic models that the sums of three cubes have positive density in the set of natural numbers. 
This expectation is further discussed in \cite{hooley:sometopics} and is supported by numerical results \cite{deshouillers:cubes}.  
It is also known that there are $\gg N^{1-\epsilon}$ sums of three cubes up to $N$, for every $\epsilon>0$, conditionally on analytic conjectures involving certain $L$-functions \cite{heathbrown:cubes,hooley:cubes:L1,hooley:cubes:L2}. 
However, if the sums of three cubes have positive natural density, they do not lie uniformly on the number line. 
In fact, as we prove in \cite{ghidelli:gaps}, there exists a constant $\kappa>0$ so that, for $\psi(n):=\kappa\sqrt{\log n}(\log \log n)^{-2}$, a positive proportion of the intervals $(n-\psi(n),n]$ does not contain sums of three cubes. 
More generally, our result belongs to the vast literature on Waring's problem, that is the study of those numbers that can be written as sums of perfect powers. 
The interested reader is referred to the survey of Vaughan and Wooley \cite{waring:survey}. 

%
%
We now provide some details on the basic ideas of this paper. 
A classical approach known as ``diminishing ranges'' due to Hardy and Littlewood \cite{HL:diminishing}, consists in counting those sums $x_1^4+\dots+x_4^4$ in an interval $(n-Y,n]$ whose summands have a prescribed size $x_j^4\asymp P_j^4$. 
More precisely, we fix $\PP:=(P_1,P_2,P_3,P_4,Y)\in\R_+^5$ with 
	\begin{align}\label{P:range}
		P_j^{3/4}&\leq P_{j+1}\leq P_j   & (1\leq j\leq 3)
	\end{align} 
 	and let $R(n)=R(n,\PP)$ denote  the number of solutions to the equation 
 	\begin{equation}\label{eq:sysR}
 	n = x_1^4 + x_2^4 + x_3^4 + x_4 ^4 + y
 	\end{equation}
 	subject to 
 	\begin{equation}\label{eq:subR}
 	0\less y\leq Y,\quad \c P_i\less x_i \leq P_i\quad (1\leq i \leq 4). 
 	\end{equation}
	If $n\asymp P_1^4$, say $n\in(N/2,N]$ with $N=P_1^4$, then we expect that, at least on average, $R(n)\asymp Y P_1^{-3} P_2 P_3 P_4$, because there are $\asymp N$ choices for the parameter $n$ and $\asymp YP_1P_2P_3P_4$ choices for the values of the variables of \cref{eq:sysR}. 
	In fact, using the circle method \cite{vaughan:book} of Hardy and Littlewood we prove the following analog of the main lemma in \cite{daniel}.
\begin{theorem}\label{final:main}
	Let $\gamma_0$ be as in \cref{cor:main} and let $\gamma_1:=4992/16384\approx 0.3046$. 
	Given $N>0$ and $\gamma_0<\gamma\leq \gamma_1$, we let $Y:= N^\gamma$, $P=P_1:= \sqrt[4] N$  and $P_{j+1}=P_j^{13/16}$ for $1\leq j\leq 3$. 
	Then for each $\epsilon>0$ we have 
	\begin{align}\label{eq:final:main}
		\sum_{\c N<n\leq N} \abs{R(n)-\RR(n)}^2 &\ll_\epsilon Y N^{1-\gamma_0+\epsilon},
	\end{align}
	where the implied constant depends only on $\epsilon$, and $\RR(n) := \tfrac 1 {32} Y P_2 P_3 P_4 n^{-3/4}$.
\end{theorem}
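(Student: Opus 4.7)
The natural approach is the Hardy--Littlewood circle method. Write $f_j(\a) := \sum_{\c P_j < x \leq P_j} e(\a x^4)$ for $j = 1,\dots,4$ and $K_Y(\a) := \sum_{0 < y \leq Y} e(\a y)$, so that $R(n)$ is the $n$-th Fourier coefficient of $F(\a) := f_1 f_2 f_3 f_4 K_Y$. Modelled on Daniel's treatment of three cubes, I would construct an approximating function $F^\ast$ supported on a set of major arcs $\MM$ of modulus $\leq Q = P^\eta$, built as the standard product of complete Gauss sums and smoothed oscillatory integrals, and arranged so that its Fourier coefficients agree with $\RR(n)$ up to admissible error on $\c N < n \leq N$. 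Parseval's identity applied to $F - F^\ast$ (together with the triangle inequality for the residual Fourier-coefficient error) then reduces the theorem to the two bounds
\[
\int_\MM \abs{F - F^\ast}^2 \, d\a \;\ll\; Y N^{1-\gamma_0+\epsilon} \quad\text{and}\quad \int_\mm \abs{F}^2 \, d\a \;\ll\; Y N^{1-\gamma_0+\epsilon},
\]
where $\mm := \TT\setminus\MM$.

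For the major arcs I would insert the standard quartic Weyl-sum approximation $f_j(\a) \approx q^{-1} S(q,a) v_j(\a - a/q)$ on $|\a - a/q| \leq Q/(qN)$, and the analogous expansion of $K_Y$. A sufficiently small choice of $\eta$ makes $|F - F^\ast|$ smaller than $|F|$ by a power of $P$ on each arc, and a routine computation of the singular series and singular integral reconstructs the prediction $\RR(n) = \tfrac{1}{32} Y P_2 P_3 P_4 n^{-3/4}$ for $n \asymp N$ with acceptable error.

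The crux is the minor-arc bound. A naive attempt uses
\[
\int_\mm \abs{F}^2 \, d\a \;\leq\; \sup_{\a \in \mm} |f_1(\a)|^2 \cdot \int_0^1 \abs{f_2 f_3 f_4 K_Y}^2 \, d\a ,
\]
where Weyl's inequality (or its Vaughan refinement) supplies $\sup_\mm |f_1| \ll P^{7/8+\epsilon}$, and the mean value counts integer solutions to $x_2^4+x_3^4+x_4^4+y = (x_2')^4+(x_3')^4+(x_4')^4+y'$ in the prescribed ranges. Fixing $z := y - y' \in (-Y, Y)$ reduces this to a Hua-type count for a short-range diagonal fourth-power equation, which the rapid decay $P_2 \gg P_3 \gg P_4$ handles well. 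At $\gamma = \gamma_0$, however, this pairing is just insufficient, so $\mm$ must be further dissected into nested layers and pointwise Weyl bounds applied alternately to $f_1$ and to $f_2$ (the latter valid on an inner set of minor arcs), interpolating between the pointwise and $L^2$ inputs. The exponent $13/16$ in $P_{j+1} = P_j^{13/16}$ is precisely what balances this interpolation, and unwinding the resulting numerology produces the specific value $\gamma_0 = 4059/16384$. I expect this sharp minor-arc analysis to be the principal obstacle, since the diminishing-ranges structure, the slack supplied by $K_Y$, and the Weyl input must each be used without loss and the dissection parameters tuned exactly; by contrast the major-arc work, though lengthy, is essentially dictated by the shape of $\RR(n)$ and follows the standard circle-method template.
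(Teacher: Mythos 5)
Your framework (Parseval/Bessel plus a major/minor dissection, modelled on Daniel) is the right skeleton, but the proposal misses the one idea that actually makes the minor arcs close, and the patch you sketch for the shortfall points in the wrong direction.

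Quantitatively, the naive pairing you write down fails by a fixed power of $P$. Weyl's inequality (or Vaughan's refinement) on minor arcs with bounded denominator gives $\sup_\mm |f_1| \ll P^{7/8+\epsilon}$, so $\sup_\mm|f_1|^2 \ll P^{7/4+\epsilon}$, while the mean value $\int_0^1 |f_2f_3f_4 g|^2\,d\a \asymp Y P_2P_3P_4$ (which itself needs proof). The product is $\asymp Y P^{7/4}\cdot P_2P_3P_4 = Y P^{15397/4096+\epsilon}$, whereas the target is $Y N^{1-\gamma_0+\epsilon} = Y P^{12325/4096+\epsilon}$ --- a gap of $P^{3/4}$. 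The fix cannot come from interpolating between $L^\infty$ and $L^2$ inputs for $f_1$ and $f_2$ alone: any such split still has to cope with $\int_\TT |f_1f_2f_3f_4g|^2\,d\a$, which genuinely is $\asymp Y P_1 P_2 P_3 P_4$, but whose off-diagonal pieces are not controlled well enough by pointwise Weyl savings on $f_1$.

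What Daniel's method (and this paper) actually does, and what you omit, is a change of variables exploiting the diminishing ranges. Writing $x_1' = x_1 + h$ in the underlying equation $x_1^4 + \dots + y = (x_1')^4 + \dots + y'$, one observes that the constraints $P_3^4, P_4^4, Y \leq P_2^4 = o(P_1^4)$ force $|h| \ll P_1^{-3}P_2^4$. Splitting the $L^2$ count $S$ as $2T + (S-2T)$ (with $T$ the $h>0$ contribution) replaces $|f_1|^2$ by the difference-polynomial sum $H_1(\a) = \sum_{h,x} e(\a[(x+h)^4 - x^4])$, which has only $O(P_1^{-2}P_2^4) = O(P^{5/4})$ terms. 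Weyl differencing on $H_1$ over minor arcs with modulus up to $P_1$ (not a small power $P^\eta$) then gives $\sup_{\mi 1} |H_1| \ll P^{1+\epsilon}$, exactly $P^{3/4}$ better than $\sup|f_1|^2$, and $\Tm 1 \lls P_1 \cdot \S{2}$ closes the deficit. Meanwhile $S - 2T$ is the $h=0$ contribution, $\asymp P_1 \S{2}$, and one iterates the same trick backward on the number of variables ($\S j - 2\T j \asymp P_j \S{j+1}$, down to the base case $\S 4$, which is forced to be diagonal since $Y \leq \tfrac12 P_4^3$). Without this change of variables and induction, both your minor-arc bound and your informal Hua-type count of $\int|f_2f_3f_4g|^2$ lack justification. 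A further (smaller) structural difference: the paper does not build an $F^\ast$ supported on all major arcs; it uses a single central arc around $0$ together with Bessel's inequality to pass from $R$ to $S_1$, and runs the full major/central/minor dissection only on the transformed quantities $\T j$.
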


From this quantitative result one may deduce nontrivial moment estimates for the the size of gaps between sums of four fourth powers, as in \cite[Corollary 2]{daniel} or \cite[Theorem 1.2]{BWshort2}. Moreover, as we will show in the next section, \cref{final:main} implies  \cref{cor:main}. We also claim more generally that, with essentially the same strategy and some more work, one may possibly show that in almost every interval of the form $(N-N^\gamma,N]$ there is a number $m=x_1^k+\dots+x_h^k$ that can be written as the sum of $h\geq 2$ perfect $k$-th powers, provided that $k\geq 3$ and $\gamma>\gamma_0(h,k)$, where
\begin{equation}\label{eq:daniel:generalized}
\gamma_0(h,k):=1-\frac 1 k (1 + \theta_k + \theta_k^2 + \dots + \theta_k^{h-1}), 
\end{equation}
with 
\begin{equation}
\theta_k := 1 - \frac 1 k + \frac {1}{k 2 ^{k-2}}.
\end{equation}

We notice that $\gamma_0(4,4)=4059/16384$ is the exponent that appears in \cref{cor:main} and that $\theta_4=13/16$ is the exponent we use for diminishing the ranges in \cref{final:main}. 
Therefore our result solves the case $h=k=4$ while Daniel \cite{daniel} deals with the case $h=k=3$. 
Recently, a paper of Br\"udern and Wooley \cite{BWshort2} has settled the case $h=2$ for all $k\geq 3$. 
Even though the treatment of only two variables simplifies part of the argument (e.g. the final induction on the number of variables becomes trivial), the case treated by Br\"udern and Wooley should be considered as the hardest one. 
In fact their paper introduces some technical modifications to the original strategy of Daniel, which are unnecessary here.

In addition to the results that we have just mentioned, a few more remarks are in order with respect to the general claim enunciated above. 
The first is that stronger statements are known to be true if $h$ is somewhat larger than $k$. For example, we know that all natural numbers can be written as a sum of $h$ $k$-th powers, if $h$ is large enough \cite{waring:survey}.  
Secondly the claim is nontrivial in general: in comparison the greedy argument produces the exponent $\gamma(h,k)=(1-1/k)^h$, which is the same as \cref{eq:daniel:generalized}, with $\theta_k$ replaced by the smaller $\theta'_k:=1-1/k$. 
Finally, the recent progress on the Vinogradov mean value theorem \cite{Vino:proof,Vino:expo,Vino:Woo} should make it possible to replace $\theta_k$ with a larger value, if $k$ is large enough; see the note in the introduction of \cite{BWshort2} for a more precise remark on this matter.


In closing, let us briefly illustrate the main ingredients in the proof of \cref{final:main}. 
%
%
First the number $R(n)$ is rewritten, by Fourier analysis, as an integral of an exponential sum. 
Then Bessel's inequality is used to produce an integral formula that estimates from above the left-hand side of \eqref{eq:final:main}. 
A characterstic feature of Daniel's approach is that this part of the proof (\cref{sec:R,sec:S}) is performed in conjunction with a triple application of the circle method,%
\footnote{Corresponding to the three pairs of integrals $R\sim U$, $S\sim V$ and $T\sim W$ introduced in the proof.} %
where only one major arc centered around the origin is considered. 
The upper bound that results from this preliminary phase is then finally estimated using a more classical application of the circle method and an induction on the number of variables of the underlying diophantine equations, to produce the expression in the right-hand side of \eqref{eq:final:main}. 
Technically, the minor arcs are treated with a version \cite[Lemma 1]{Vaughan:Weil} of the Weyl differencing inequality \cite[Lemma 2.4]{vaughan:book}, while the major arcs are treated with classical estimates mostly due to Vaughan \cite[Chapter 4]{vaughan:book}. 
%
%
%
In conclusion, we express our contentment in noticing the fortuitous happenstance: that this approach produces an exponent $\gamma_0\approx 0.24774$, that is just barely good enough for our original purpose. 

 
\subsection*{Acknowledgements}
  
I would like to thank my supervisor Damien Roy for his steady encouragement, his careful reading of this manuscript and for his many comments and suggestions. 
This work was supported in part by 
 a full International Scholarship from the Faculty of Graduate and Postdoctoral Studies of the University of Ottawa and by NSERC. 



\section{Heuristics and quantitative results}
\label{sec:K}

In this section we comment on the statement of \cref{final:main} and its consequences regarding the size of gaps between sums of four fourth powers. 

\subsection{Choice of parameters and notation}

In the remainder of the article we write $N=P^4$ and $Y=P^{4\gamma}$, where
\begin{equation}\label{eq:gamma}
	\gamma\in (\tfrac {4059}{16384}, \tfrac{4992}{16384}]
\end{equation}
 and $P$ is some parameter that we let grow to infinity. 
 We also let
\begin{align}\label{eq:P}
	P_1 &= P^{\tfrac{4096}{4096}}
	&
	P_2 &= P^{\tfrac{3328}{4096}}
	&P_3 &= P^{\tfrac{2704}{4096}}
	&P_4 &= P^{\tfrac{2197}{4096}} 
\end{align}
as in \cref{final:main} so that $P_{j+1}=P_j^{13/16}$ for $j=1,2,3$. 
The inequality $\gamma>\tfrac {4059}{16384}$ implies that 
\begin{equation}\label{hyp:gamma0}
	N= o(YP_1P_2P_3P_4)
\end{equation}
which is crucial in the approach of this paper. 
The hypothesis $\gamma\leq \tfrac {4992}{16384}$ is imposed only for technical reasons, as it ensures that
\begin{equation}\label{hyp:gamma1}
	Y^{-2} \geq {P_2}^{-3}.
\end{equation}
In fact the validity of this inequality simplifies some proofs, e.g. that of \cref{S0T0}. 
We denote $\PP=(P_1,P_2,P_3,P_4,Y)$ and define $R(n)=R(n,\PP)$ accordingly, see \cref{sec:intro}.  
Throughout the paper we make various estimates in terms of the parameter $P$, but we also write the results, when possible, in a way that makes explicit the dependence on the choice of $P_1,\dots, P_4$. 
As usual, the notation $A\ll B$ means that  $\abs A\leq c B$ for some absolute $c>0$. 
The contributions of terms that are logarithmic in $P$ or anyway asymptotically smaller than any positive power of $P$ will systematically be collected into a ``$P^\epsilon$ term''. 
We will write $A\lls B$ to mean that $\abs A \leq c P^\epsilon B$, for every $\epsilon>0$ and for some $c=c(\epsilon)>0$ depending only on $\epsilon$. 

\subsection{The heuristic expected value of $R(n)$}

The diminished ranges \eqref{eq:subR} for the variables of \eqref{eq:sysR} reduce the number of sums of fourth powers at our disposal, and so enlarge the gaps between them. 
However the advantage is that those particular sums of powers are more easily controlled, so that it is possible to estimate $R(n)$ as in \cref{final:main}. 
The expected average value of $R(n)$, given by the formula 
\begin{equation}\label{def:RR}
	\RR(n):= \tfrac 1 {32} Y P_2 P_3 P_4 n^{-3/4}
\end{equation}
 is heuristically obtained as follows. 
 Suppose that $P$ is large and that $n\asymp P^4$ is restricted to an interval $n\in(n_0,n_1]$ with $\Delta n:=n_1-n_0 = o(P_1^4)$ and $Y\leq P_2^4 = o(\Delta n)$. 
Then every solution to \cref{eq:sysR}, constrained by \eqref{eq:subR}, also satisfies 
\begin{equation}\label{eq:subR:bis}
n_1^{1/4}\geq x_1> (n_0-4 P_2^4)^{1/4} =: n_1^{1/4}-\Delta x
\end{equation}
with $\Delta x\approx \tfrac 1 4 \Delta n\cdot n^{-3/4}$. 
There are $\Delta n$ choices for the parameter $n\in(n_0,n_1]$ and $\approx 2^{-3}\Delta x P_2 P_3 P_4 Y$ choices of $x_j$ and $y$ constrained by \eqref{eq:subR} and \eqref{eq:subR:bis}, hence we expect that $R(n)\approx \RR(n)$ with $\RR(n)$ as above. 
We notice en passant that $N^{-3/4}P_2P_3P_4 = N^{-\gamma_0}$, where $\gamma_0=4059/16384$, so
\begin{equation}\label{asymp:RR}
	\RR(n)\asymp Y N^{-\gamma_0}.
\end{equation}
 
Therefore we also heuristically expect that a typical $n\in (N/2, N]$ satisfies $R(n)\geq 1$, as soon as $Y$ is somewhat larger than $N^{\gamma_0}$. 

\subsection{Bounding the number of large gaps}

We now show how to prove from \cref{final:main} that the gaps of size $N^\gamma$ with $\gamma>\gamma_0:=4059/16384$ are rare.
For every $\gamma>0$ we denote by $K'(N,N^\gamma)$ the number of $n\in(N/2, N]$ 
with the property that no element of the interval $(n-N^\gamma,n]$ is a sum of four fourth powers.	
\begin{theorem}\label{thm:main}
	Let $\gamma_0$ and $\gamma_1$ be as in \cref{final:main}. Then
	\begin{equation}\label{eq:thm:main}
	K'(N,N^\gamma) \ll N^{1-\xi}  
	\end{equation}
	for every $\gamma>\gamma_0$ and all $\xi<\min\{\gamma_1-\gamma_0,\gamma-\gamma_0\}$. 
\end{theorem}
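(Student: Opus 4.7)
The plan is to deduce \cref{thm:main} from \cref{final:main} by a Chebyshev-type second-moment argument. The key observation is that if $n\in(N/2,N]$ is counted by $K'(N,N^\gamma)$, meaning that $(n-N^\gamma,n]$ contains no sum of four fourth powers, then choosing the parameter $Y$ strictly smaller than $N^\gamma$ forces \cref{eq:sysR} subject to \cref{eq:subR} to admit no solutions whatsoever: indeed any such solution would produce a sum of four fourth powers $m = n - y$ with $m \in (n-Y,n) \subseteq (n-N^\gamma,n]$, contradicting the defining property of $K'$. Hence $R(n)=0$ for every $n$ counted by $K'(N,N^\gamma)$, and the full heuristic main term $\RR(n)$ contributes to $\lvert R(n)-\RR(n)\rvert^{2}$.

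To apply \cref{final:main}, I would fix a $\gamma'$ with $\gamma_0<\gamma'<\min(\gamma,\gamma_1)$ and set $Y:=N^{\gamma'}$. This places us inside the regime where \cref{final:main} is available while guaranteeing $Y<N^\gamma$ for all sufficiently large $N$. From \cref{asymp:RR} we have $\RR(n)\asymp YN^{-\gamma_0}$ uniformly for $n\in(N/2,N]$, so each $n$ counted by $K'(N,N^\gamma)$ contributes $\gg Y^{2}N^{-2\gamma_0}$ to the second-moment sum on the left side of \cref{eq:final:main}.

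Combining this pointwise lower bound with the upper bound of \cref{final:main} yields
\begin{equation*}
K'(N,N^\gamma)\cdot Y^{2}N^{-2\gamma_0}\;\ll\;\sum_{N/2<n\leq N}\lvert R(n)-\RR(n)\rvert^{2}\;\ll_{\epsilon}\;YN^{1-\gamma_0+\epsilon}.
\end{equation*}
Isolating $K'(N,N^\gamma)$ and substituting $Y=N^{\gamma'}$ produces the bound $K'(N,N^\gamma)\ll_{\epsilon}N^{1-(\gamma'-\gamma_0)+\epsilon}$. Since $\gamma'$ may be chosen arbitrarily close to $\min(\gamma,\gamma_1)$, for any $\xi<\min(\gamma_1-\gamma_0,\gamma-\gamma_0)$ one picks $\gamma'$ and $\epsilon$ satisfying $\gamma'-\gamma_0-\epsilon>\xi$, yielding the desired conclusion $K'(N,N^\gamma)\ll N^{1-\xi}$.

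I do not expect any serious obstacle: all of the analytic work has been packaged into \cref{final:main}, and the deduction amounts to a standard Chebyshev inequality. The only delicate point is the bookkeeping between $\gamma$, which fixes the window of interest, and $\gamma'$, which sets the scale of $Y$ required to invoke \cref{final:main}; this interplay is exactly what produces the minimum appearing in the statement. The upper cap $\gamma\leq\gamma_1$ arising through \cref{hyp:gamma1} is a purely technical artefact of \cref{final:main}, and any widening of its range would automatically strengthen \cref{thm:main}.
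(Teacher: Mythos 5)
Your proof is correct and follows essentially the same route as the paper: apply Chebyshev's inequality to the second-moment bound of \cref{final:main}, using the uniform lower bound $\RR(n)\gg YN^{-\gamma_0}$ on $(N/2,N]$ together with the observation that $n$ counted by $K'$ forces $R(n)=0$. The only cosmetic difference is in the bookkeeping: the paper sets $Y=N^\gamma$ directly for $\gamma\le\gamma_1$ (which already suffices, since $y\le Y$ puts $m=n-y$ in $(n-N^\gamma,n]$) and handles $\gamma>\gamma_1$ separately via the trivial monotonicity $K'(N,N^\gamma)\le K'(N,N^{\gamma_1})$, whereas you fold both cases into one by choosing $Y=N^{\gamma'}$ with $\gamma_0<\gamma'<\min(\gamma,\gamma_1)$ and letting $\gamma'$ approach the minimum; this is a perfectly equivalent reparametrization and the same $\epsilon$-absorption step appears in both. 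One nit: with the constraint $0<y\le Y$, the image of $n-y$ is $[n-Y,n)$ rather than $(n-Y,n)$, but the inclusion into $(n-N^\gamma,n]$ still holds under $Y<N^\gamma$, so the argument is unaffected.
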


\begin{proof}
	 If $\gamma\leq \gamma_1$ we may apply 
	 \cref{final:main}. Let $K''(N,\PP)$ denote the number of $n\in (N/2, N]$ for which $R(n)=R(n,\PP)=0$. 
	 For each of those $n$ we have $\abs{R(n)-\RR(n)}=\RR(n)\geq \RR(N)$, hence
\begin{equation}\label{eq:proof:K}
\sum_{\c N<n\leq N} \abs{R(n)-\RR(n)}^2 \gg K''(N,\PP)\cdot \RR(N)^2.
\end{equation}

 It is clear that $K'(N,N^\gamma)\leq K''(n,\PP)$ because whenever the interval $(n-Y,n]$ is empty of sums of four fourth powers, where $Y=N^\gamma$, then $R(n)=0$. 
 By \cref{eq:proof:K,asymp:RR,eq:final:main} we get
 \begin{equation}\label{eq:inequality:K}
 	K'(N,N^\gamma) \ll \RR(N)^{-2}\sum_{\c N<n\leq N} \abs{R(n)-\RR(n)}^2  \ll Y^{-1} N^{1+\gamma_0+\epsilon}
 \end{equation} 
 for every $\epsilon>0$. 
 This gives \cref{eq:thm:main} if $\gamma_0<\gamma\leq\gamma_1$. 
 If $\gamma>\gamma_1\approx0.3046$ then we simply use the inequality $K'(N,N^\gamma)\leq K'(N,N^{\gamma_1})$. 
\end{proof}

We remark that for $\gamma>(3/4)^4\approx 0.3164$ one in fact has $K'(N,N^\gamma)=0$ if $N\gg 1$, by the greedy algorithm mentioned in the introduction.  
%
%
We now show that \cref{cor:main} 
is follows from \cref{thm:main}. 
\begin{proof}[Proof of \protect\cref{cor:main}]
	Fix $\gamma>\gamma_0$ and let $K_\gamma(N)$ count the natural numbers $n\leq N$ such that no element of the interval $(n-n^\gamma,n]$ is a sum of four fourth powers. 
	Take some $\gamma'\in(\gamma_0,\gamma)$ and let $N_0$ be such that $N^{\gamma'}\leq (N/2)^\gamma$ for all $N\geq N_0$. 
	Then for every real number $N\geq N_0$ we have 
	\begin{equation}\label{K:K'}
		K_\gamma(N) \leq N_0+ \sum_{k=0}^{\lfloor \log_2 N/N_0 \rfloor} K'(N/2^k, (N/2^k)^{\gamma'}).
	\end{equation}
	Then by \eqref{eq:thm:main} we get
	\begin{equation}
		K_\gamma(N) \ll N^{1-\xi} \sum_{k=0}^\infty (2^{-(1-\xi)})^k \ll_{\xi} N^{1-\xi},
	\end{equation}
	where $\xi$ is any positive number with $\xi+\gamma_0<\min\{\gamma_1,\gamma'\}$. 
	In particular, we have that $K_\gamma(N)=o(N)$ as $N\to\infty$. 
\end{proof} 

\section{On the expected value of $R(n)$}
\label{sec:R}

In this section we rewrite the number $R(n)$ in a way that makes it amenable to be studied with analytic methods. Then we give a first estimate of the deviation $R(n)-\RR(n)$ via a partial application of the circle method, with only one major arc centered at zero.

\subsection{Integral representation and Weyl sums}

We denote by $e(\xi):=e^{2\pi i \xi}$ the normalized complex exponential function, considered as an additive character of $\TT$. 
By the ``orthogonality property'' we mean the well-known fact that for all $m\in\Z$ we have
\begin{equation}
	\int_\TT e(m\a) d\a = \begin{cases} 1 & \text{ if } m=0\\ 0 & \text{ if } m\neq 0 .\end{cases}
\end{equation}
By orthogonality we can rewrite $R(n)=R(n,\PP)$ as follows
\begin{align}
	R(n)&:=\int_\TT \sum_{\substack{y,x_1,\ldots,x_4\\ \c P_j<x_j\leq P_j\\ 0\leq y<Y}} e((x_1^4+x_2^4+x_3^4+x_4^4+y- n)\a) d\a
	\\
	& = \int_\TT f_1 f_2 f_3 f_4 g  e(-n\a)d\a ,
	\label{def:R:total} 
\end{align}
where $f_i=f(\a,P_i)$, $g= g(\a,Y)$ are given by the following Weyl exponential sums
\begin{align}
	f(\a,X)&:= \sum_{\c X< x\leq X} e(\a x^4) \\
	g(\a,Y)&:= \sum_{0 \leq  y< Y} e(\a y). 
\end{align}

We observe that $g(\a,Y)$ is the sum of a geometric progression, therefore we have 
\begin{equation}
g(\a,Y) = \frac{ e(\a (Y+O(1)) ) - 1 }{e(\a) - 1}.
\end{equation}

From this formula, we easily get the following estimates for the function $g$.

\begin{lemma}\label{lemma:g}
	\begin{align}
		g(\a,Y) &\leq Y					& & \text{for all $\a$,}
		\label{gT}  \\
		g(\a,Y) &\ll \norm{\a}^{-1} 	& & \text{for all $\a$,} 
		\label{g1}	\\
		g(\a,Y)&= Y+O(1)				& & \text{if $\norm\a\leq Y^{-2}$}. 
		\label{g0}
	\end{align}
	where $\norm{\a}$ denotes the distance of $\a\in\TT$ from 0.
\end{lemma}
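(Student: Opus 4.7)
The plan is to derive all three estimates from the closed-form geometric-sum identity displayed just before the lemma, namely
\[
g(\alpha,Y) = \frac{e(\alpha(Y+O(1))) - 1}{e(\alpha) - 1},
\]
combined with the standard elementary inequality $|e(\alpha)-1| = 2|\sin(\pi\alpha)| \geq 4\norm{\alpha}$, which controls the denominator in terms of the distance to the nearest integer. With these two tools in hand, the three bounds are essentially immediate.

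First, for \eqref{gT} I would bypass the closed form entirely and argue directly from the definition: by the triangle inequality, $|g(\alpha,Y)|$ is at most the number of integers $y$ with $0\leq y<Y$, which is bounded above by $Y$. For \eqref{g1}, the numerator of the closed form has modulus at most $2$, so dividing by the lower bound $|e(\alpha)-1|\geq 4\norm{\alpha}$ yields $|g(\alpha,Y)| \leq 1/(2\norm{\alpha}) \ll \norm{\alpha}^{-1}$.

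The only step with any content is \eqref{g0}. Under the hypothesis $\norm{\alpha}\leq Y^{-2}$, I would pick the representative $\alpha\in(-1/2,1/2]$, so that $|\alpha|=\norm{\alpha}$ and $|\alpha|(Y+O(1)) \ll Y^{-1}$ is small. Then I Taylor-expand $e(t)-1 = 2\pi i t + O(t^2)$ in both numerator and denominator of the closed form; the common factor $2\pi i \alpha$ cancels, leaving
\[
g(\alpha,Y) = \bigl(Y+O(1)\bigr)\,\frac{1+O(\alpha Y)}{1+O(\alpha)} = \bigl(Y+O(1)\bigr)\bigl(1+O(\alpha Y)\bigr).
\]
Since $|\alpha|Y^2 \leq 1$ by hypothesis, the multiplicative error $O(\alpha Y)\cdot Y$ is $O(1)$, and one obtains $g(\alpha,Y) = Y+O(1)$ as claimed. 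I do not anticipate any real obstacle; the whole lemma is purely elementary. The only point requiring minor care is the bookkeeping of the $O(1)$ coming from whether $Y$ is an integer, which is already absorbed into the $O(1)$ inside $e(\alpha(Y+O(1)))$.
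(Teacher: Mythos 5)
Your argument is correct and is the natural elementary one; the paper in fact omits any proof (``we easily get the following estimates''), so you are simply supplying the details the author left to the reader. The bounds \eqref{g1} and \eqref{g0} follow exactly as you describe: for \eqref{g1} the numerator of the geometric-sum formula has modulus at most $2$ and the denominator satisfies $|e(\alpha)-1|=2|\sin(\pi\alpha)|\geq 4\norm{\alpha}$; for \eqref{g0} the Taylor expansion of $e(t)-1$ in both numerator and denominator, plus the observation that $\norm{\alpha}Y^2\leq 1$, yields $Y+O(1)$, with the trivial direct evaluation handling the removable singularity at $\alpha=0$.

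One tiny point of bookkeeping, which is actually an imprecision already present in the statement of the lemma rather than in your write-up: the sum $\sum_{0\leq y<Y}e(\alpha y)$ has $\lceil Y\rceil$ terms, so the triangle inequality gives $|g(\alpha,Y)|\leq\lceil Y\rceil$, which can exceed $Y$ when $Y\notin\Z$. Thus \eqref{gT} as literally stated (and as you reproduce it) should really read $|g(\alpha,Y)|\leq Y+O(1)$, or just $g\ll Y$. This is of course immaterial everywhere the bound is used downstream, but it is worth noticing rather than asserting ``bounded above by $Y$'' without comment.
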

%

The estimates contained in \cref{lemma:g} imply that the integrand in \cref{def:R:total} is approximately equal to $e(-\a n)f_1f_2f_3f_4Y$ when $\a$ is close to 0, while it becomes ``small'' when $\a$ is bounded away from 0. 

\subsection{An approximation}
%

Under the assumption $\a\approx 0$ it is possible to approximate the Weyl sum $f(\a,X)$ with its ``mollification''
\begin{equation}
\nu(\a,X):= \sum_{\cc X^4< z\leq X^4} \tfrac 1 4 z^{-3/4}e(\a z),
\end{equation}
which is a weighted exponential sum that involves linear phases instead of biquadratic ones. 
%
From the book of Vaughan \cite{vaughan:book} we retrieve the following estimates.

\begin{lemma}
	\begin{align}
		&\nu(\a,X)\ll X 	& &  \text{for all $\a$,}
		\label{nuT} \\
		&\nu(\a,X)\ll X^{-3}\norm{\a}^{-1}		& &  \text{for all $\a$,}
		\label{nu1} \\	 
		&f(\a,X)\ll X 	& &  \text{for all $\a$,}
		\label{fT}	\\
		&f(\a,X) =\nu(\a,X) + O(1)		& &  \text{if $\norm \a \leq \cb X^{-3}$.}
		\label{f0} 
	\end{align}
\end{lemma}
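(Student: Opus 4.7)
The four estimates split into three routine bounds \eqref{nuT}, \eqref{nu1}, \eqref{fT} and one substantive approximation \eqref{f0}. For \eqref{nuT}, trivially $|\nu(\a,X)|\leq\sum_{\cc X^4<z\leq X^4}\tfrac{1}{4}z^{-3/4}\ll\int_{\cc X^4}^{X^4}\tfrac{1}{4}t^{-3/4}\,dt=\c X$. For \eqref{nu1}, I would apply summation by parts: the weights $\tfrac{1}{4}z^{-3/4}$ are monotonically decreasing of size $\asymp X^{-3}$ on the summation range, and the partial sums $\sum_{z\leq M}e(\a z)$ are bounded by $\norm{\a}^{-1}$ via the geometric series formula, giving $\nu(\a,X)\ll X^{-3}\norm{\a}^{-1}$. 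For \eqref{fT}, at most $\c X$ unit-modulus terms are summed, so trivially $f(\a,X)\ll X$.

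The main estimate \eqref{f0} I would prove by showing that both $f(\a,X)$ and $\nu(\a,X)$ approximate the common integral
\[
I(\a)\;:=\;\int_{\c X}^X e(\a u^4)\,du\;=\;\int_{\cc X^4}^{X^4}\tfrac{1}{4}z^{-3/4}e(\a z)\,dz
\]
(the second form via the substitution $u=z^{1/4}$). An Euler--Maclaurin argument easily gives $\nu(\a,X)=I(\a)+O(X^{-2})$, since the integrand's derivative has $L^1$-norm $\ll X^{-3}+|\a|X\ll X^{-2}$ under $|\a|\leq\cb X^{-3}$. For $f$, however, naive Euler--Maclaurin produces only $f(\a,X)=I(\a)+O(|\a|X^4)=I(\a)+O(X)$, which falls short by a factor $X$. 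To recover the sharp $O(1)$, I would apply Poisson summation to $g(u):=\chi_{(\c X,X]}(u)\,e(\a u^4)$: the Fourier coefficient $\hat g(0)$ equals $I(\a)$, while for integer $m\neq 0$ the phase $\a u^4-mu$ has derivative $4\a u^3-m$ bounded below in absolute value by $|m|-4|\a|X^3\geq|m|/2$, using $|\a|\leq\cb X^{-3}$. Iterated integration by parts then yields $\hat g(m)\ll_k|m|^{-k}$ for any $k\geq 2$, so $\sum_{m\neq 0}\hat g(m)=O(1)$, producing $f=\nu+O(1)$.

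The main obstacle is precisely the sharp $O(1)$ bound in \eqref{f0}: a direct comparison between the sum $f(\a,X)$ and its continuous analogue $I(\a)$ yields only $O(X)$, the error produced by a single application of Euler--Maclaurin. The improvement hinges on the absence of stationary points of the phases $\a u^4-mu$ for integer $m\neq 0$, an input which is guaranteed precisely by the hypothesis $\norm{\a}\leq\cb X^{-3}$ in the lemma.
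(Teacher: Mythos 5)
The first three estimates are fine: the trivial bounds \eqref{nuT} and \eqref{fT} are immediate, and your partial-summation argument for \eqref{nu1} is standard and correct. (For the record, the paper does not prove these from scratch; it simply cites Lemma~6.2 of Vaughan's book for \eqref{nuT} and \eqref{nu1}.)

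Your argument for \eqref{f0}, however, contains a genuine gap. You set $g(u)=\chi_{(\frac12 X,X]}(u)\,e(\a u^4)$, observe that the phase $\a u^4-mu$ has no stationary point since $|4\a u^3 - m|\geq |m|/2$, and then claim that ``iterated integration by parts yields $\hat g(m)\ll_k |m|^{-k}$ for any $k\geq 2$.'' That claim is false. The function $g$ has jump discontinuities at the endpoints of the interval, so each integration by parts produces boundary terms of the form $\dfrac{e(\a u^4 - mu)}{2\pi i(4\a u^3 - m)}\Big|_{\frac12 X}^{X}$, which are of size $\asymp 1/|m|$ and which do not improve under further integrations by parts; the genuine decay rate of $\hat g(m)$ is only $O(1/|m|)$. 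Consequently $\sum_{m\neq 0}|\hat g(m)|$ diverges, the Poisson series does not converge absolutely, and your argument as written does not close. The Poisson/non-stationary-phase strategy can be pushed through, but it requires a more delicate treatment: for example, using the truncated Euler--Maclaurin identity and the Fourier expansion of the sawtooth $\psi(t)=\{t\}-\tfrac12$, one gets an error of the form $\int_{X/2}^{X}\psi(t)\cdot 8\pi i\a t^3 e(\a t^4)\,dt$, and then one must integrate by parts \emph{inside} each Fourier mode of $\psi$ and use the $\alpha$-dependence of the boundary factor $4\a t^3/(4\a t^3\pm m)\ll 1/m$ to obtain a second factor of $1/m$, making the resulting double sum converge to $O(1)$. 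This is essentially the classical van der Corput first-derivative lemma, and it is the content of Vaughan's Lemma~6.1 (with $q=1$), which is what the paper actually invokes here. As it stands, your decay claim is the step that would fail.
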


\begin{proof}
	The estimates \eqref{nuT} and \eqref{nu1} are a restatement of \cite[Lemma 6.2]{vaughan:book}. 
	The estimate \eqref{fT} is trivial because $f(\a,X)$ is a sum of $O(X)$ exponentials. 
	Finally, \eqref{f0} follows from \cite[Lemma 6.1]{vaughan:book} with $q=1$.
\end{proof}

Then alongside $f_1,\dots, f_4$ we consider the mollified Weyl sums 
\begin{equation}\label{def:nu}
	\nu_j := \nu(\a,P_j).
\end{equation}
From \eqref{f0} we have that the approximation $f_j\approx \nu_j$ is admissible, up to an error of $O(1)$, on the interval $\AA j _0\subseteq \TT$ given by 
\begin{align}\label{BBj}
	\AA j_0 &=\{\a: \norm\a\leq \cb P_j^{-3}\}.
\end{align}
The complement of \eqref{BBj} in $\TT$ will be denoted by $\AA j_1$. 
In the range of small $\norm\a$ we also have $g\approx Y$: more precisely by \eqref{g0} and \eqref{hyp:gamma1} we have that $g-Y$ is bounded by an absolute constant on $\AA 1_0$ and $\AA 2_0$. 
Then, we consider the following integral

\begin{align}
U(n)&:= Y \int_\TT e(-n\a) \nu_1 \nu_2 \nu_3 \nu_4  d\a 
\label{def:U:total}
\end{align}

The integrand in \cref{def:U:total} is approximately equal to $e(-\a n)f_1f_2f_3f_4Y$ when $\a$ is close to 0, and it is small when $\a$ is bounded away from 0. 
Thus, by what we said at the end of the previous paragraph, we heuristically expect that $U(n)\sim R(n)$. 
We now show that $U(n)$ is in fact close to the expected value $\RR(n)$, up to an admissible error.

\begin{proposition}\label{UX}
	The following estimate holds uniformly for $n\in(\c N,N]$:
	\begin{align}
		U(n)-\RR(n) &\ll Y P_1^{-7}P_2^{5}P_3P_4 =  Y P^{-\tfrac{7131}{4096}}.		
	\end{align}
\end{proposition}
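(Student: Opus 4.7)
My plan is to evaluate $U(n)$ directly by orthogonality, without recourse to splitting $\TT$ into major and minor arcs. Unfolding the Fourier integral yields
\[
U(n) = \frac{Y}{256}\sum_{\substack{z_1+z_2+z_3+z_4=n \\ P_j^4/16 < z_j \leq P_j^4}} \prod_{j=1}^4 z_j^{-3/4}.
\]
For $n\in (\c N,N]$ and $P$ sufficiently large, a quick inspection shows that the side constraint $z_1\in(P_1^4/16,P_1^4]$ is automatically satisfied whenever $z_2,z_3,z_4$ lie in their prescribed ranges, because $P_2^4+P_3^4+P_4^4\ll P_2^4=o(N)$. Thus I may drop that constraint and treat the right-hand side as a free triple sum over $z_2,z_3,z_4$, where $z_1=n-z_2-z_3-z_4$ is the dominant summand.

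\textbf{Main computation.} Writing $\delta:=z_2+z_3+z_4\ll P_2^4$, first-order Taylor expansion around $n\asymp P_1^4$ gives
\[
z_1^{-3/4}=(n-\delta)^{-3/4}=n^{-3/4}+O(n^{-7/4}\delta)=n^{-3/4}+O(P_1^{-7}P_2^4).
\]
Replacing $z_1^{-3/4}$ by $n^{-3/4}$ in the triple sum and using $\sum_{P_j^4/16<z\leq P_j^4} z^{-3/4}\ll P_j$ (by comparison with $\int z^{-3/4}\,dz=4z^{1/4}$), the resulting error is bounded by
\[
\ll Y\cdot P_1^{-7}P_2^4 \cdot P_2 P_3 P_4 = Y P_1^{-7} P_2^5 P_3 P_4 = YP^{-7131/4096},
\]
which matches the proposition. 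It remains to identify the leading term with $\RR(n)$: using the sharper $\sum_{P_j^4/16<z\leq P_j^4} z^{-3/4}=2P_j+O(P_j^{-3})$, the product over $j=2,3,4$ equals $8P_2P_3P_4$ with absolute error $O(P_2P_3P_4^{-3})$, and multiplying by $Yn^{-3/4}/256$ recovers exactly $\frac{1}{32}YP_2P_3P_4 n^{-3/4}=\RR(n)$.

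\textbf{Bookkeeping obstacle.} The only subtlety is that the sum-vs-integral error, contributing $\ll YP_1^{-3}P_2P_3P_4^{-3}$ to $U(n)-\RR(n)$, must be dominated by the Taylor error; this reduces to verifying $P_1\ll P_2P_4$, which holds because in the parametrization \cref{eq:P} one has $P_2P_4=P^{5525/4096}$. No step presents a serious difficulty; the main concern is simply to keep the exponents straight in order to recognize that $P^{-7131/4096}$, and not some other power, is the correct dominant contribution.
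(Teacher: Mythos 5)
Your proof is correct and follows essentially the same route as the paper: unfold $U(n)$ by orthogonality, observe the $z_1$-constraint is automatic, eliminate $z_1=n-z_2-z_3-z_4$, and compare $z_1^{-3/4}$ with $n^{-3/4}$ while summing the remaining three geometric-like sums. The only superficial difference is bookkeeping style (you track absolute errors from the Taylor step and the sum-vs-integral step separately and compare them, whereas the paper carries relative $1+O(\cdot)$ factors and notes $P_4^{-4}\ll P_2^4P_1^{-4}$, which is exactly your condition $P_1\leq P_2P_4$).
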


\begin{proof}
	By the definitions and by orthogonality, we have
	\begin{equation}\label{URR:dioph}
	U(n) = Y \sum_{\substack{
			\cc P_j^4 < z_j \leq P_j^4 \\
			z_1+z_2+z_3+z_4 = n
	}} 
	\tfrac 1 {256}(z_1 z_2 z_3 z_4) ^ {-3/4}.
	\end{equation}
	Since $P_j = o(P_1)$ for each $2\leq j\leq 4$, we have the inequality
	\begin{equation}\label{URR:P234}
		P_2^4+P_3^4+P_4^4  
		< \left(\c-\cc\right) P_1^4
	\end{equation}
	for all $P$ large enough. 
	Since moreover $\c P_1^4< n \leq P_1^4$, we have for every $n,z_2,z_3,z_4$ in the appropriate range that 
	\begin{equation}
	\cc P_1^4 < n-z_2-z_3-z_4 \leq P_1^4.
	\end{equation}
	In other words in \eqref{URR:dioph} we can safely express $z_1$ in terms of the other variables:
	\begin{equation}
	U(n) = \tfrac 1 {256}\sum_{\substack{
			z_2,z_3,z_4 \\
			\cc P_j^4 < z_j \leq P_j^4 
	}} 
	(z_2 z_3 z_4) ^ {-3/4}
	n^{-3/4} \left(1-\frac{z_2+z_3+z_4}{n}\right)^{-3/4}.
	\end{equation}
	We observe that $z_2+z_3+z_4 = O(P_2^4)$ and that 
	\begin{equation}
	\sum_{\cc P_j^4< z_j\leq P_j^4} \tfrac 1 4 z_j^{-3/4} 
	= \int_{\cc P_j^4}^{P_j^4} \tfrac 1 4 t^{-3/4} dt + O(P_j^{-3}),
	\end{equation}
	which is equal to $\c P_j (1+O(P_j^{-4})).$
	Since $P_2^{-4}\ll P_3^{-4}\ll P_4^{-4}\ll P_2^{4}P_1^{-4}$ we conclude that
	\begin{equation}
	U(n) = \tfrac 1 {32} Y P_2 P_3 P_4 n^{-3/4} (1+ O(P_2^{4}P_1^{-4})).
	\end{equation}
\end{proof}

\subsection{First application of the circle method}

For every $n\in\N$ and every measurable set $\BB\subseteq\TT$ (with respect to the natural Lebesgue-Haar measure) we define
\begin{align}
	R(n,\PP,\BB)&:=\int_\BB e(-n\a) f_1 f_2 f_3 f_4 g  d\a 
	\label{def:R}, \\
	U(n,\PP,\BB)&:= Y \int_\BB e(-n\a) \nu_1 \nu_2 \nu_3 \nu_4  d\a 
	\label{def:U}.
\end{align}

Since $Y^{-2}\geq \cb P_2^{-3}\geq \cb P_1^{-3}$ by \eqref{hyp:gamma1}, the approximations $g= Y+O(1)$ and $f_j=\nu_j+O(1)$ for $1\leq j\leq 4$ are valid when $\a\in\AA 1 _0$, where
\begin{equation}
\AA 1 _0 = [-\cb P_1^{-3},\cb P_1^{-3}].
\end{equation}

We define $\AA 1 _1$ to be its complement so that we have a partition $\TT=\AA 1 _0\sqcup \AA 1 _1$. Then we let $R_i(n):=R(n,\PP,\AA 1 _i)$ for $i\in\{1,0\}$ so that 
$R(n)=R_0(n)+R_1(n)$. 
In the remaining part of this section, we are going to prove that 
\begin{equation}\label{step1}
\abs{R(n)-\RR(n)} \leq \ER + \abs{R_1(n)}
\end{equation}
where $\ER$ is an error term satisfying the following estimate
\begin{equation}\label{step1:error}
\ER \lls  Y P^{-\tfrac{5083}{4096}} \approx  Y P^{-1.240967}.
\end{equation}

More precisely, we decompose $U(n)=U_0(n)+U_1(n)$ as we did for $R(n)$ via $U(n,\PP,\BB)$ and the partition $\TT=\AA 1 _0\sqcup \AA 1 _1$. 
Then by the triangular inequality  \eqref{step1} holds with
\begin{equation}
	\ER := \abs{R_0(n)-U_0(n)} + \abs{U_1(n)} + \abs{U(n)-\RR(n)}.
\end{equation}
The third absolute value was estimated in \cref{UX}; the other two terms are treated in the following propositions.


\begin{proposition}\label{U1U1}
	\begin{align}
		U_1(n) &\ll Y P_2^{-3}P_3P_4 =  Y P^{-\tfrac{5083}{4096}}.	
		\label{U1}
	\end{align}
\end{proposition}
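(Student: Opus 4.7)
The plan is to bound $U_1(n)$ by estimating the integrand $\nu_1\nu_2\nu_3\nu_4$ pointwise on $\AA^1_1$ and then integrating the resulting majorant. The key observation is that on $\AA^1_1=\{\a\in\TT:\norm\a>\cb P_1^{-3}\}$ we are bounded away from $0$ enough that the decay estimate \eqref{nu1} becomes useful for both $\nu_1$ and $\nu_2$, since $P_2^{-3}\geq P_1^{-3}\geq 8\cb P_1^{-3}$. Applying \eqref{nu1} to these two sums and the trivial bound \eqref{nuT} to $\nu_3$ and $\nu_4$, I would write
\begin{equation}
\abs{\nu_1\nu_2\nu_3\nu_4} \ll P_1^{-3}P_2^{-3}P_3P_4\,\norm\a^{-2} \qquad (\a\in\AA^1_1).
\end{equation}

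Next I would integrate this bound over $\AA^1_1$. Since $\norm\a^{-2}$ is locally integrable away from the origin, the only contribution is from the boundary region near $\norm\a\asymp P_1^{-3}$, and a direct computation gives
\begin{equation}
\int_{\AA^1_1}\norm\a^{-2}\,d\a \;=\; 2\int_{\cb P_1^{-3}}^{1/2}t^{-2}\,dt \;\ll\; P_1^{3}.
\end{equation}
Combining this with the preceding pointwise estimate and the prefactor $Y$ from the definition \eqref{def:U} of $U(n,\PP,\BB)$ yields
\begin{equation}
\abs{U_1(n)} \;\leq\; Y\int_{\AA^1_1}\abs{\nu_1\nu_2\nu_3\nu_4}\,d\a \;\ll\; YP_1^{-3}P_2^{-3}P_3P_4\cdot P_1^{3} \;=\; YP_2^{-3}P_3P_4,
\end{equation}
which is exactly the required bound. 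The numerical value $P^{-5083/4096}$ then follows by substituting the exponents from \eqref{eq:P}: $-3(3328)+2704+2197=-5083$ in the numerator.

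I do not expect any real obstacle here: the whole point of putting the threshold $\cb P_1^{-3}$ into the definition of $\AA^1_0$ was precisely to make the transition from \eqref{nuT} to \eqref{nu1} happen at the right scale, and the inequality $Y^{-2}\geq P_2^{-3}$ from \eqref{hyp:gamma1} guarantees that $\nu_2$ is still in the ``decay'' regime throughout $\AA^1_1$. The only minor care is to distribute the $\nu_j$'s correctly — one easily checks that using \eqref{nu1} on $\nu_1$ alone (with \eqref{nuT} on the other three) would produce the worse bound $P_1^{-3}P_2P_3P_4\log P$, whereas balancing the two decaying factors against each other absorbs the logarithm and produces exactly $P_2^{-3}P_3P_4$.
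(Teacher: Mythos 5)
Your proof is correct and follows the same route as the paper: apply \eqref{nu1} to $\nu_1,\nu_2$ and \eqref{nuT} to $\nu_3,\nu_4$, then integrate $\norm\a^{-2}$ over $\AA 1_1$ to pick up a factor $\ll P_1^3$. (The side remark attributing the validity of \eqref{nu1} for $\nu_2$ to the hypothesis $Y^{-2}\geq P_2^{-3}$ is not needed — \eqref{nu1} holds unconditionally for all $\a$ — but this does not affect the argument.)
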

\begin{proof}
By \eqref{nu1} applied to $\nu_1,\nu_2$ and \eqref{nuT} applied to $\nu_3,\nu_4$ we have
\begin{equation}
U_1(n) \ll YP_1^{-3}P_2^{-3}P_3P_4\int_{\AA 1 _1} \norm\a ^{-2} d\a
\end{equation}
and so \eqref{U1} follows from an elementary computation. 
\end{proof}

\begin{proposition}\label{R0R0}
	\begin{align}
		R_0(n)-U_0(n) 
		&\lls Y P_2^{-3}P_3P_4 
		=  Y P^{-\tfrac{5083}{4096}+\epsilon}.	
		\label{RU} 
	\end{align}
\end{proposition}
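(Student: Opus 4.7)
The plan is to exploit the uniform validity of the approximations $f_j = \nu_j + O(1)$ and $g = Y + O(1)$ on $\AA 1 _0$. Indeed, since $P_j \leq P_1$ we have $\AA 1 _0 \subseteq \AA j _0$ for every $j$, so \eqref{f0} gives $f_j = \nu_j + O(1)$ throughout $\AA 1 _0$, and \eqref{hyp:gamma1} combined with \eqref{g0} gives $g = Y + O(1)$ on $\AA 1 _0$ as well. Writing $\phi_j := f_j - \nu_j$ and $\psi := g - Y$ (so $|\phi_j|, |\psi| \ll 1$) and expanding the product, one gets
\[
f_1 f_2 f_3 f_4 g - Y \nu_1 \nu_2 \nu_3 \nu_4 = \sum_{(S,\eta)\neq(\emptyset,0)} \Bigl(\prod_{j \in S}\phi_j\Bigr) \Bigl(\prod_{i \in T}\nu_i\Bigr) Y^{1-\eta}\psi^\eta,
\]
where $T := \{1,\dots,4\} \setminus S$ and $(S,\eta)$ ranges over $2^{\{1,\dots,4\}} \times \{0,1\}$. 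Taking absolute values and integrating,
\[
|R_0(n) - U_0(n)| \ll \sum_{(S,\eta) \neq (\emptyset,0)} Y^{1-\eta} \int_{\AA 1 _0} \prod_{i \in T} |\nu_i| \, d\alpha.
\]

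The essential ingredient is the $L^1$ estimate $\int_{\AA 1 _0} |\nu_k(\alpha)| \, d\alpha \lls P_k^{-3}$ for $k \in \{1, 2\}$, which I would prove by splitting the integral at $|\alpha| = P_k^{-4}$. The split lies inside $\AA 1 _0$ since $P_k \geq P^{13/16}$ forces $P_k^{-4} \leq \cb P_1^{-3}$ for $P$ large, and one then applies \eqref{nuT} on $|\alpha| \leq P_k^{-4}$ and \eqref{nu1} on the remainder, picking up only a logarithmic factor absorbed into $\lls$. Combined with the trivial bounds $|\nu_i| \ll P_i$ and $|\AA 1 _0| \ll P_1^{-3}$, this yields the master estimate
\[
\int_{\AA 1 _0} \prod_{i \in T} |\nu_i| \, d\alpha \lls P_k^{-3} \prod_{i \in T \setminus \{k\}} P_i,
\]
valid for any $T \subseteq \{1,\dots,4\}$ and any $k \in T \cap \{1,2\}$ (when $T \cap \{1,2\} = \emptyset$, one uses $\int_{\AA 1 _0} \prod_i |\nu_i| \ll P_1^{-3} \prod_{i \in T} P_i$ directly).

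A direct comparison of exponents then shows that the dominant contribution arises from $(S, \eta) = (\{1\}, 0)$, i.e.\ $T = \{2,3,4\}$, bounded via $k = 2$ by $Y P_3 P_4 P_2^{-3} = Y P^{-5083/4096}$, which matches the target. The second-largest contribution is $(S, \eta) = (\emptyset, 1)$, giving $P_1^{-3} P_2 P_3 P_4 = P^{-4059/4096}$ (using $k = 1$); this is absorbed into $Y P^{-5083/4096}$ because $Y = P^{4\gamma} \geq P^{1/4}$, as $\gamma > \gamma_0 > 1/16$. The remaining 29 terms produce strictly smaller exponents by the same bookkeeping.

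The main technical subtlety is the sharper $L^1$ bound $\int_{\AA 1 _0} |\nu_k| \lls P_k^{-3}$, which exploits the crossover between \eqref{nuT} and \eqref{nu1}. The coarser alternative $\int_{\AA 1 _0} |\nu_k| \leq P_k |\AA 1 _0| \ll P_k P_1^{-3}$ would yield $Y P^{-4059/4096}$ for the dominant term---precisely the threshold exponent $\gamma_0$---and so would not be enough.
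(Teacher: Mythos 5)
Your proposal is correct and follows essentially the same route as the paper: both expand the product difference on $\AA 1_0$ (the paper via a telescoping decomposition with $\mu_j := \max\{|\nu_j|,1\}$, you via a full binomial expansion over subsets $S$ and $\eta$), and both hinge on the same key $L^1$ estimate $\int |\nu_j| \, d\alpha \ll P_j^{-3}\log P_j$ (the paper's Lemma~\ref{lemma:vi}, which you reprove by the identical dyadic split at $\|\alpha\| = P_j^{-4}$), applied to $\nu_2$ for the dominant term $YP_3P_4P_2^{-3}$ and to $\nu_1$ for the secondary one.
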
 
\begin{proof}
Since $P_j^3\leq P_1^3$ for all $j$ and since $Y^2\leq 8 P_1^3$, we have by \eqref{f0} and \eqref{g0} 
\begin{equation}
R_0(n)-U_0(n) \ll
\int_{\AA 1 _0} 
	\left( 
		{\mu_1\mu_2\mu_3\mu_4} 
		+ Y({\mu_1\mu_2\mu_3} 
		+ {\mu_1\mu_2\mu_4} + {\mu_1\mu_3\mu_4} 
		+ {\mu_2\mu_3\mu_4}) 
	\right)
	d\a,
\end{equation}
where $\mu_j:=\max\{\abs{\nu_j},1\}$. 
We use \eqref{nuT}, i.e the trivial estimate $\mu_j\ll P_j$, on the factors with higher indices, to obtain
\begin{equation}
R_0(n)-U_0(n) \ll
P_2 P_3 P_4 \left(1 + \tfrac Y {P_2} + \tfrac Y {P_3} + \tfrac Y {P_4}\right) 
\int_{\AA 1 _0} {\mu_1}d\a 
+ YP_3P_4 \int_{\AA 1 _0} {\mu_2} d\a.
\end{equation}
Since $Y\geq P_4$ the factor that multiplies the first integral is $\asymp YP_2P_3$. Since $\mu_j\leq \abs{\nu_j}+1$ we can rewrite the last estimate as
\begin{equation}
R_0(n)-U_0(n) \ll Y P_2 P_3 \cdot P_1^{-3} +
YP_2 P_3 
\int_0^1 \abs{\nu_1}d\a 
+ YP_3P_4 \int_0^1 \abs{\nu_2} d\a.
\end{equation}

Then \cref{RU} follows from the following lemma, that we state separately for future reference, and the inequality $P_2P_1^{-3}=P^{-\tfrac{8960}{4096} } < P^{\tfrac{7787}{4096} } = P_4 P_2^{-3}$.
\end{proof}

\begin{lemma}\label{lemma:vi}
	\begin{align}
		\int_{\TT} \abs{\nu_j} d\a
		&\ll P_j^{-3} \log P_j .
		\label{vi} 
	\end{align}
\end{lemma}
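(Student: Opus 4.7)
The plan is to bound $|\nu_j|$ using the two estimates available from the previous lemma, namely the trivial bound \eqref{nuT} ($\nu_j \ll P_j$) and the decay bound \eqref{nu1} ($\nu_j \ll P_j^{-3} \|\alpha\|^{-1}$), and then to split the integration over $\TT$ at the crossover point where the two bounds agree, i.e.\ at $\|\alpha\| \asymp P_j^{-4}$.

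More concretely, I would partition $\TT$ (identified with $[-1/2,1/2]$) into the ``central arc'' $C_j := \{\alpha \in \TT : \|\alpha\| \leq P_j^{-4}\}$ and its complement $C_j^c$. On $C_j$, the trivial estimate \eqref{nuT} gives
\begin{equation*}
	\int_{C_j} |\nu_j|\, d\alpha \ll P_j \cdot \operatorname{meas}(C_j) \ll P_j \cdot P_j^{-4} = P_j^{-3}.
\end{equation*}
On $C_j^c$, the decay estimate \eqref{nu1} yields
\begin{equation*}
	\int_{C_j^c} |\nu_j|\, d\alpha \ll P_j^{-3} \int_{P_j^{-4}}^{1/2} \frac{d\alpha}{\alpha} \ll P_j^{-3} \log P_j.
\end{equation*}
Summing the two contributions produces the desired bound $\int_\TT |\nu_j|\, d\alpha \ll P_j^{-3} \log P_j$.

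There is no real obstacle here: the result is a textbook dyadic-style splitting argument, and the only minor point to check is that the crossover $\|\alpha\| \asymp P_j^{-4}$ is the optimal choice (which it is, by balancing the two bounds). The logarithmic loss is unavoidable from the integral $\int \alpha^{-1} d\alpha$, and it is absorbed harmlessly into the $P^\epsilon$ terms elsewhere in the paper.
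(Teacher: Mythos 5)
Your proof is correct and follows essentially the same route as the paper: the paper's own proof estimates $\nu_j$ exactly by this case split at $\|\alpha\|\asymp P_j^{-4}$ (trivial bound \eqref{nuT} near the origin, decay bound \eqref{nu1} elsewhere) and then leaves the resulting integral as an "elementary computation," which you have spelled out.
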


\begin{proof}
	We estimate $\nu_j$ with 
	\begin{equation}\label{nu:estimating}
		\nu_j \ll 
		\begin{cases}
			P_j, 
			& \text{if $\norm \a\leq P_j^{-4}$, by \protect\eqref{nuT}},
			\\
			P_j^{-3}/\norm \a 
			& \text{otherwise, by \protect\eqref{nu1}}.
		 \end{cases}
	\end{equation}
	%
	Then the inequality follows from an elementary computation.
\end{proof}


\section{On the mean square deviation of $R(n)$}
\label{sec:S}

In this section we use Bessel's inequality to find an integral expression that bounds from above the average value of $\abs{R(n)-\RR(n)}^2$ for $n\in(N/2,N]$. 
We then perform a change of variables in the underlying arithmetic equation that makes the estimates on the absolute value of the integrand benefit from the restricted ranges $x_2,x_3,x_4\leq P_2=o(P_1)$. 
Finally we use again the circle method to estimate the error introduced by this change of variables. 

\subsection{Bessel's inequality}

From \eqref{step1} and the inequality $(A+B)^2\leq 2(A^2+B^2)$ we obtain that 
\begin{equation}\label{eq:pre:bessel}
\sum_{\c N< n\leq N} \abs{R(n)-\RR(n)}^2 \leq N \ER^2 + 2\sum_{\c N< n\leq N} \abs{R_1(n)}^2.
\end{equation}
In order to estimate the sum on the right, we use Bessel's inequality, as in \cite[eq.(12)]{daniel}, which in this case reveals that
\begin{equation}\label{eq:true:bessel}
	\sum_{\c N< n\leq N} \abs {R_1(n)}^2 \leq \int_{\AA 1 _1} \abs{f_1 f_2 f_3 f_4 g}^2 d\a.
\end{equation}
It is natural now to consider, for every measurable set $\BB\subseteq\TT$, the integral
\begin{equation}\label{def:S}
	S(\PP,\BB):=\int_\BB \abs{f_1 f_2 f_3 f_4 g}^2 d\a
\end{equation} 
and to let $S,S_0,S_1$ denote $S(\PP,\BB)$ respectively for $\BB = \TT,\AA 1 _0,\AA 1 _1$. 
With this notation, \cref{eq:pre:bessel} and \cref{eq:true:bessel} can be combined to give the inequality
\begin{equation}\label{eq:bessel}
\sum_{\c N< n\leq N} \abs{R-\RR}^2 \leq  N \ER^2 + 2S_1.
\end{equation}
We notice that this inequality has an underlying arithmetic meaning. 
In fact we have $S=S_0+S_1$ and we observe that $S$ counts the solutions to the equation 
\begin{equation}\label{eq:sysS}
x_1^4 + \dots + x_4 ^4+ y = {x'}_1^4 + \dots + {x'}_4 ^4+ y'
\end{equation}
subject to 
\begin{equation}\label{eq:subS}
0\less y,y'\leq Y,\quad \c P_i\less x_i,x'_i \leq P_i\quad (1\leq i \leq 4),
\end{equation} 
by orthogonality.

\subsection{A change of variables}\label{sec:T:def}

The equation \eqref{eq:sysS} can be rewritten in the following form
\begin{equation}\label{eq:sysT}
{(x_1+h)}^4 - {x_1}^4 = ({x_2}^4 -{x_2'}^4) + ({x_3}^4 -{x_3'}^4)+ ({x_4}^4 -{x_4'}^4) + (y - y'),
\end{equation}
where $h:=x_1'-x_1$. We now focus only on those solutions, subject to \eqref{eq:subS}, for which $h>0$. 
By orthogonality, their number $T$ is computed by the integral 
\begin{equation}\label{def:T}
	T=\int_\TT H_1 \abs{f_2 f_3 f_4 g}^2 d\a,
\end{equation}
where $H_1 = H(\a,P_1,\ch P_1^{-3}P_2^4)$ is an exponential sum associated to the difference polynomial $\Delta(x,h):= (x+h)^4 - x^4$:
\begin{equation}\label{def:H}
H(\a,X,Z) = \sum_{\substack{1\leq h\leq Z\\\c X < x \leq X-h}} e(\a [(x+h)^4 - x^4]).
\end{equation}

Indeed every such solution satisfies
\begin{equation}
h=x_1'-x_1 \leq ({x'_1}^4 - x_1^4) x_1^{-3} \leq 4P_2^4(\c P_1)^{-3}
\end{equation}
because of \cref{eq:sysT} and the inequalities $P_3^4,P_4^4,Y\leq P_2^4$. 
The number $S$ can be estimated by decomposing it naturally as $S=2T + (S-2T)$. The term $S-2T$ accounts for the solutions of \cref{eq:sysT} for which $h=0$, i.e. it corresponds to an equation in fewer variables, since $x_1$ can be eliminated. 
The term $2T$ instead is computed via the integral \eqref{def:T}. 
This is easier to estimate than the integral in \cref{def:S}, because its integrand is an exponential sum with fewer terms. 
Indeed $H_1$ only has $O(P_1^{-2}P_{2}^4)=O(P^{5/4})$ summands, which is noticeably less than the $O(P^2)$ terms of $\abs{f_1}^2$. In particular, we record that the trivial estimate
\begin{equation}
H_1(\a) \ll P_1^{-2} P_{2}^{4}
\label{H1T}
\end{equation}
holds uniformly for all $\a\in\TT$. 

\subsection{A mollified version of $\protect\abs{S-2T}$ near the origin}

Given the output \eqref{eq:bessel} of Bessel's inequality, we actually need to estimate the term $S_1$, which is a portion of the integral $S=S_0+S_1$ corresponding to the $\a$ that are bounded away from the origin. 
The idea is to decompose $T$ somewhat analogously as $T_0+T_1$ and then estimate $S_1$ as
\begin{equation}\label{eq:step2:s1}
S_1 \leq \abs{S_0-2T_0} + \abs{2T_1} + \abs{S-2T}.
\end{equation}
Since near the origin we have the estimates $g=Y+O(1)$ and $f_j=\nu_j+O(1)$, it is natural to compare the difference $S_0-2T_0$ with its mollified version $V-2W$, where
\begin{align}
	V &:= Y^2 \int_\TT \abs{f_1 \nu_2 \nu_3 \nu_4}^2 d\a, \\ 
	W &:= Y^2 \int_\TT H_1 \abs{\nu_2 \nu_3 \nu_4}^2 d\a.
\end{align}

Notice that we did not replace $f_1$ with its mollified version because we don't want to interfere with the change of variable that relates $\abs{f_1}^2$ to $H_1$. 
In the following proposition we estimate the difference $V-2W$ by looking at the underlying weighted diophantine equation. 
%
%
%

\begin{proposition}
	\begin{align}
		\label{VW}
		V-2W 
		&\ll 
		Y^2 P_1 P_2^{-2} P_3^{2} P_4^{2} =  Y^2. P^{\tfrac{7242}{4096}}
	\end{align}
\end{proposition}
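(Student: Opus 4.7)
The plan is to interpret $V-2W$ arithmetically via orthogonality, reduce it to the weighted contribution of the diagonal $x_1 = x_1'$ (where the variable $x_1$ decouples from the equation), and then bound the resulting integral by trivial $L^\infty$/$L^2$ estimates on the mollified sums $\nu_j$. I do not anticipate any serious obstacle.

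The starting point is the expansion $\abs{f_1}^2 = \sum_{x,x'} e(\alpha(x^4-{x'}^4))$. Splitting by the sign of $h := x'-x$, the diagonal $h = 0$ contributes a constant $N_{\mathrm{diag}} = \lfloor P_1\rfloor - \lfloor P_1/2\rfloor \asymp P_1$, while the off-diagonal parts $h > 0$ and $h < 0$ give $H_1$ and $\overline{H_1}$ respectively, so $\abs{f_1}^2 = N_{\mathrm{diag}} + H_1 + \overline{H_1}$. Now the kernel $\abs{\nu_2\nu_3\nu_4}^2$ is real and invariant under $\alpha \mapsto -\alpha$, whereas $H_1(-\alpha) = \overline{H_1(\alpha)}$; hence $Y^2\int_\TT H_1 \abs{\nu_2\nu_3\nu_4}^2 d\alpha = Y^2 \int_\TT \overline{H_1} \abs{\nu_2\nu_3\nu_4}^2 d\alpha = W$, and so
\begin{equation*}
V - 2W \;=\; N_{\mathrm{diag}}\cdot Y^2 \int_\TT \abs{\nu_2\nu_3\nu_4}^2\,d\alpha \;\ll\; P_1 Y^2 \int_\TT \abs{\nu_2\nu_3\nu_4}^2\,d\alpha.
\end{equation*}
This is the exact mechanism used to handle $S-2T$ in \cref{sec:T:def}.

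To bound the remaining integral I would apply the trivial $L^\infty$ estimate $\norm{\nu_j}_\infty \ll P_j$ from \eqref{nuT} to $\nu_3$ and $\nu_4$, and keep $\nu_2$ in $L^2$:
\begin{equation*}
\int_\TT \abs{\nu_2\nu_3\nu_4}^2\,d\alpha \;\ll\; P_3^2 P_4^2 \int_\TT \abs{\nu_2}^2\,d\alpha.
\end{equation*}
By orthogonality, $\int_\TT \abs{\nu_2}^2\,d\alpha = \sum_{\cc P_2^4 < z \leq P_2^4} \tfrac{1}{16} z^{-3/2} \ll P_2^{-2}$ by direct comparison with $\int z^{-3/2}\,dz$. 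Combining the three bounds yields $V-2W \ll Y^2 P_1 P_2^{-2} P_3^2 P_4^2$, as required. The only step that needs some care is the symmetry argument that collapses $\abs{f_1}^2 - 2H_1 = N_{\mathrm{diag}} - 2i\,\mathrm{Im}(H_1)$ to $N_{\mathrm{diag}}$ after integration against the even, real kernel $\abs{\nu_2\nu_3\nu_4}^2$; this is automatic but worth spelling out to justify that no extra term survives.
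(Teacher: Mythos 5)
Your decomposition $\abs{f_1}^2 = N_{\mathrm{diag}} + H_1 + \overline{H_1}$ is not a correct identity of functions on $\TT$. By its definition \eqref{def:H}, the exponential sum $H_1 = H(\a,P_1,\ch P_1^{-3}P_2^4)$ is \emph{truncated} at $h\leq \ch P_1^{-3}P_2^4 \asymp P_1^{1/4}$, whereas the off-diagonal part of $\abs{f_1}^2$ ranges over all $h=x'-x$ with $0<\abs h<\tfrac12 P_1$. Thus $\abs{f_1}^2 - N_{\mathrm{diag}} - H_1 - \overline{H_1}$ is the contribution of the ``long'' differences $\abs h > \ch P_1^{-3}P_2^4$, which is not identically zero; consequently the rearrangement $\abs{f_1}^2 - 2H_1 = N_{\mathrm{diag}} - 2i\,\mathrm{Im}(H_1)$ is false, and the "automatic" symmetry step you flag would collapse the wrong expression.

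The argument can be repaired, but only by the very orthogonality observation the paper records as \eqref{rho:0} and \eqref{rr'}: the Fourier coefficients $\rho(n)$ of $\abs{\nu_2\nu_3\nu_4}^2$ vanish for $\abs n>3P_2^4$, so upon integration only pairs $(x,x')$ with $\abs{{x'}^4-x^4}\leq 3P_2^4$, hence $\abs{x'-x}\leq 24 P_1^{-3}P_2^4 < \ch P_1^{-3}P_2^4$, can survive — and for those the truncation of $H_1$ is harmless, so the off-diagonal parts of $V$ and $2W$ cancel exactly. Once that is spelled out you are left with $V-2W = N_{\mathrm{diag}}\cdot Y^2\rho(0) = Y^2 r(0)\rho(0)$, which is precisely the paper's intermediate expression, and your $L^\infty$/$L^2$ bound $\rho(0)=\int_\TT\abs{\nu_2\nu_3\nu_4}^2\,d\a\ll P_2^{-2}P_3^2P_4^2$ correctly finishes. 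In other words, the method is the same as the paper's; the gap is that the orthogonality/Fourier-support step you hoped to suppress is the load-bearing one, not a formality.
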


\begin{proof}
	By orthogonality we have that
	\begin{equation}
	V = Y^2\sum_{n\in\Z} r(n) \rho(n)
	\end{equation}
	where $r(n)=r(n,P_1)$ is as in \eqref{def:r} and 
	\begin{equation}\label{def:rho}
	\rho(n) := \sum_{\substack{
			\cc P_j^4 < z_j,z'_j \leq P_j^4 \\
			z_2+z_3+z_4 - z'_2-z'_3-z'_4 = n	
	}}
	\tfrac 1 {4^6} (z_2z'_2z_3z'_3z_4z'_4)^{-3/4}.
	\end{equation}
	Similarly, we have
	\begin{equation}
	W = Y^2\sum_{n=1}^\infty r'(n) \rho(n)
	\end{equation}
	where $r'(n)$ is as in \eqref{def:r'}.
	We notice immediately that 
	\begin{align}\label{rho:0}
		\rho(n)&=0 && \text{for $\abs n > 3 P_2^4$}. 
	\end{align} 
	On the other hand we have
	\begin{align}\label{rr'}
		r(n)&=2r'(\abs n) && \text{for $0<\abs n \leq 4 P_2^4$}
	\end{align}
	because for $\c P_1< x,x'\leq P_1$ the inequality $\abs{{x'}^4 - x^4}\leq 4 P_2^4$ implies 
	\begin{equation}
	\abs{x'-x} \leq ({x'}^4 - x^4) \min\{x,x'\}^{-3} \leq \ch P_1^{-3}P_2^4. 
	\end{equation}
	In other words by \eqref{rho:0} and \eqref{rr'} we have
	\begin{equation}
	V-2W = Y^2 r(0) \rho(0). 
	\end{equation}
	Since $r(0) = \tfrac 1 2 P_1 + O(1)$ and 
	\begin{equation}
	\rho(0) \ll P_2^4 P_3^8 P_4 ^8 (P_2^8 P_3^8 P_4^8)^{-3/4}
	\end{equation}
	the proposition is proved. 
\end{proof}

\subsection{Some useful estimates}

Before we proceed to study the difference between $\abs{V-2W}$ and ``$\abs{S_0-2T_0}$'' (where $T_0$ has yet to be defined rigorously) we need to collect a few nontrivial estimates on integrals that involve $\abs{\nu_j}^2$, $\abs{f_j}^2$ and $H_1$. 
The first is similar to the one in \cref{lemma:vi}.
\begin{lemma}
	\begin{align}
		\int_{\TT} \abs{\nu_j}^2 d\a
		&\ll P_j^{-2}.
		\label{vii} 
	\end{align}
\end{lemma}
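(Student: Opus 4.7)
The plan is to reduce this to an elementary computation via orthogonality (Parseval's identity). Since
\[
\nu_j(\a) = \sum_{\cc P_j^4 < z \leq P_j^4} \tfrac 1 4 z^{-3/4} e(\a z)
\]
is a trigonometric polynomial whose Fourier coefficients are explicit, expanding $|\nu_j|^2 = \nu_j\overline{\nu_j}$ and integrating over $\TT$ eliminates all cross terms $e(\a(z-z'))$ with $z\neq z'$. Only the diagonal survives, giving
\[
\int_\TT |\nu_j|^2\,d\a = \sum_{\cc P_j^4 < z \leq P_j^4} \tfrac 1 {16} z^{-3/2}.
\]
Since $z \asymp P_j^4$ throughout the range of summation, each term is $\asymp P_j^{-6}$, and there are $\asymp P_j^4$ summands, so the total is $\ll P_j^{-2}$, as desired.

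Alternatively, if one prefers to mirror the proof of \cref{lemma:vi}, one can split $\TT$ into $\{\norm\a\leq P_j^{-4}\}$ and its complement: on the former interval \eqref{nuT} gives $|\nu_j|^2 \ll P_j^2$, contributing $\ll P_j^{-4}\cdot P_j^2 = P_j^{-2}$ to the integral; on the latter \eqref{nu1} gives $|\nu_j|^2 \ll P_j^{-6}\norm\a^{-2}$, and $\int_{P_j^{-4}}^{1/2} \norm\a^{-2}\,d\a \ll P_j^{4}$, contributing the same order $P_j^{-2}$. Either route is routine; I would favour the Parseval approach because it is one line and the bound comes out with the correct constant structure.

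There is no real obstacle here: the estimate is sharp (both the diagonal term and the peak near the origin already force this order of magnitude), so no delicate argument is needed. The only thing to be mindful of is that the Fourier coefficients are weights $\tfrac 1 4 z^{-3/4}$ supported on an interval of length $\asymp P_j^4$ around $z\asymp P_j^4$, which makes the diagonal sum immediately collapse to the claimed $P_j^{-2}$.
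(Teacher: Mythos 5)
Your primary (Parseval) argument is correct and is genuinely different from the paper's. The paper simply cites the pointwise bounds in \eqref{nu:estimating} — namely $\nu_j\ll P_j$ near the origin and $\nu_j\ll P_j^{-3}\norm\a^{-1}$ otherwise — and integrates, exactly as in your second "alternative" paragraph; that is, the paper's route is the dyadic/pointwise one. Your preferred route instead uses orthogonality to evaluate the integral exactly as the diagonal sum $\sum_{\cc P_j^4 < z\leq P_j^4}\tfrac1{16}z^{-3/2}\asymp P_j^{-2}$. Both are one-liners, but they buy slightly different things: the Parseval route gives an exact identity (and hence the matching lower bound $\gg P_j^{-2}$ for free, confirming the estimate is sharp), while the pointwise route is stylistically uniform with how $\nu_j$ is treated elsewhere in the paper (e.g.\ \eqref{vi}) and would generalize unchanged if the weights were not explicitly known. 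There is no gap in either of your arguments; both are complete and correct.
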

\begin{proof}
	We estimate $\nu_j$ as in \eqref{nu:estimating}, so that the inequality follows from an elementary computation.
\end{proof}

\begin{lemma}
	For every $A,B,X$ we have
	\begin{equation}\label{fi}
	\int_A^{A+B} \abs{f(\a,X)}^2 d\a \ll BX + X^{-2}\log X. 
	\end{equation}
\end{lemma}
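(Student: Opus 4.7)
The plan is to expand the square, swap summation and integration, and split into diagonal and off-diagonal contributions. Namely, writing $f(\a,X)=\sum_{\c X<x\leq X}e(\a x^4)$ we have
\begin{equation*}
\int_A^{A+B}\abs{f(\a,X)}^2\,d\a
=\sum_{\c X<x,x'\leq X}\int_A^{A+B}e(\a(x^4-x'^4))\,d\a.
\end{equation*}

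First I would handle the diagonal $x=x'$: each of the $O(X)$ such pairs contributes exactly $B$, giving a total contribution of $O(BX)$, which accounts for the first term in the claimed bound. For the off-diagonal $x\neq x'$, the inner integral is
\begin{equation*}
\frac{e((A+B)(x^4-x'^4))-e(A(x^4-x'^4))}{2\pi i(x^4-x'^4)},
\end{equation*}
and hence has absolute value at most $(\pi\abs{x^4-x'^4})^{-1}$. So the off-diagonal contribution is bounded by $\sum_{x\neq x'}\abs{x^4-x'^4}^{-1}$ where the sum ranges over $\c X<x,x'\leq X$.

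Next I would exploit the diminished range. For $\c X<x,x'\leq X$ we have the factorization $x^4-x'^4=(x-x')(x+x')(x^2+x'^2)$, so that $\abs{x^4-x'^4}\gg X^3\abs{x-x'}$. Substituting yields
\begin{equation*}
\sum_{\substack{\c X<x,x'\leq X\\x\neq x'}}\frac{1}{\abs{x^4-x'^4}}\ll X^{-3}\sum_{\c X<x\leq X}\sum_{\substack{\c X<x'\leq X\\x'\neq x}}\frac{1}{\abs{x-x'}}\ll X^{-3}\cdot X\log X,
\end{equation*}
the inner sum being a standard harmonic tail bounded by $O(\log X)$ for each fixed $x$. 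This gives the term $X^{-2}\log X$, completing the proof.

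There is no real obstacle here: the estimate is a textbook computation once one notices that pairs in the truncated range $(X/2,X]$ are well-separated in the sense that $\abs{x^4-x'^4}\gtrsim X^3\abs{x-x'}$. The only thing to be mildly careful about is ensuring that the divisor-type sum is bounded correctly, but this is immediate since $\abs{x-x'}$ takes each nonzero value in $(-X/2,X/2)$ at most $O(X)$ times (and we only need the per-$x$ logarithmic bound).
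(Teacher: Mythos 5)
Your proof is correct, and it reaches the same bound as the paper's, but you organize the off-diagonal contribution differently. The paper first collects the double sum over pairs $(x,x')$ into the Fourier expansion $\abs{f(\a,X)}^2 = \sum_n r(n,X)e(\a n)$, where $r(n,X)$ counts representations ${x'}^4 - x^4 = n$, and then estimates $\sum_{n\neq 0} r(n,X)/\abs n$ by a dyadic decomposition combined with the fact that each window of length $\tfrac12 X^3$ in $n$ contains $O(X)$ representations. You instead keep the double sum explicit, use the factorization $x^4-{x'}^4=(x-x')(x+x')(x^2+{x'}^2)$ to get $\abs{x^4-{x'}^4}\gg X^3\abs{x-x'}$ directly from the restricted range $(\tfrac12X,X]$, and sum a harmonic series in $\abs{x-x'}$. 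The arithmetic content is identical — both arguments exploit that distinct fourth powers in a dyadic range are $\gg X^3$-separated — but your version is slightly more elementary and avoids introducing the dyadic decomposition. The paper's phrasing via $r(n,X)$ has the advantage that the auxiliary function $r$ and the window estimate \eqref{rAB} are reused later (e.g. in the proof of \eqref{VW}), whereas your direct factorization is self-contained for this lemma alone. One small remark: your closing sentence about $\abs{x-x'}$ taking each value $O(X)$ times is true but irrelevant; what you actually need, and correctly state in parentheses, is the per-$x$ logarithmic bound on the inner harmonic sum.
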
 
\begin{proof}
	The integral \eqref{fi} is estimated as in \cite[eq.(17)]{daniel} as follows.
	First, $\abs{f(\a,X)}^2 = \sum_{n\in\Z} r(n,X) e(\a n)$ where
	\begin{equation}\label{def:r}
	r(n,X) := \#\left\{(x,x')\left|\begin{matrix} {x'}^4 - x^4 =  n \\\  \c X\less x,x'\leq X \end{matrix}\right.   \right\}.
	\end{equation}
	Therefore
	\begin{equation}\label{eq:fi1}
	\int_{A}^{A+B} \abs{f(\a,X)}^2 d\a = \sum_{n\in \Z} r(n,X) \int_{A}^{A+B}e(\a n) d\a. 
	\end{equation}
	If $n\neq 0$ the change of variable $\b=\a n$ gives 
	\begin{equation}\label{eq:fi2}
	\int_{A}^{A+B}e(\a n) d\a
	= 
	\frac 1 n \int_{nA}^{nA+nB}e(\b) d\b 
	\leq \frac 2 {\abs n},  
	\end{equation}
	hence 
	\begin{equation}\label{eq:fi3}
	\int_{A}^{A+B} \abs{f(\a,X)}^2 d\a 
	= B r(0,X) 
	+ 
	O\left(\sum_{n\neq 0} \frac{r(n,X)}{\abs{n}}\right). 
	\end{equation}
	From the definition \eqref{def:r} we see that
	\begin{align}
		r(0,X)&\ll X, 
		&\\
		r(-n,X)&=r(n,X)
		& &\text {for all $n$,} \\
		r(n,X) &= 0 
		& &\text {for $0<\abs n\leq\tfrac 1 2 X^3$ or $\abs n >\tfrac {15}{16} X^4$.} 		
	\end{align} 
	Moreover we have that
	\begin{equation}
	\sum_{C<n\leq C+\tfrac 1 2 X^3} r(n,X)\leq X
	\end{equation}
	for every real $C$, because for every $x\in(X/2,X]$ there is at most one $x' \in(X/2,X]$ with $(C+x^4)< {x'}^4\leq (C+x^4)+\tfrac 1 2 X^3$. 
	As a consequence, we have \begin{equation}\label{rAB}
	\sum_{C<n\leq C+D} r(n,X)\leq 2DX^{-2} + O(X)	
	\end{equation} for all $C,D,X$. 
	Therefore 
	\begin{equation}\label{r/n}
	\sum_{n\neq 0} \frac{r(n,X)}{\abs{n}} 
	\leq 
	2 \sum_{k=-1}^{\lfloor\log_2 X\rfloor}
	\frac 1 {2^k X^3}  \sum_{2^k X^3<n\leq 2^{k+1}X^3} r(n,X)\ll X^{-2}\log X
	\end{equation}
	and \eqref{fi} follows.
\end{proof}

\begin{corollary}
	For all $1\leq j\leq 3$ we have
	\begin{align}
		\int_{\AA j_1} \abs{f_j}^2 \norm{\a}^{-2} d\a 
		&\ll P_j^4 \log P_j.
		\label{fB}
	\end{align}
\end{corollary}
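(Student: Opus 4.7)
The plan is to prove this corollary via a dyadic decomposition of the complementary region $\AA j_1 = \{\a\in\TT : \norm\a > \cb P_j^{-3}\}$ according to the size of $\norm \a$, applying \eqref{fi} shell by shell.

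Concretely, for $k = 0, 1, \ldots, K$ with $K := \lceil \log_2(4 P_j^3) \rceil$, I would set
\[
\AA_k := \bigl\{ \a \in \TT : 2^k \cdot \cb P_j^{-3} < \norm \a \leq 2^{k+1} \cdot \cb P_j^{-3} \bigr\},
\]
so that $\AA j_1 = \bigsqcup_{k=0}^{K} \AA_k$ (with the last shell truncated at $\norm\a = 1/2$). On $\AA_k$ one has the pointwise bound $\norm{\a}^{-2} \ll 2^{-2k} P_j^{6}$, while $\AA_k$ is the union of two intervals of length $\asymp 2^k P_j^{-3}$ centered at $\pm 2^k \cb P_j^{-3}$.

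Applying the lemma \eqref{fi} on each of those two intervals with $B = 2^k P_j^{-3}$ and $X = P_j$ yields
\[
\int_{\AA_k} \abs{f_j}^2\, d\a \ll 2^k P_j^{-3} \cdot P_j + P_j^{-2} \log P_j = 2^k P_j^{-2} + P_j^{-2}\log P_j.
\]
Combining these two bounds I get
\[
\int_{\AA_k} \abs{f_j}^2 \norm{\a}^{-2}\, d\a \ll 2^{-2k} P_j^{6}\bigl(2^k P_j^{-2} + P_j^{-2}\log P_j\bigr) = 2^{-k} P_j^{4} + 2^{-2k} P_j^{4}\log P_j.
\]

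Summing the geometric series over $k = 0, 1, \ldots, K$ gives a total bound of $O(P_j^4) + O(P_j^4 \log P_j) \ll P_j^4 \log P_j$, which is the desired estimate. There is no real obstacle here: the lemma \eqref{fi} does all the arithmetic work, and the main point is that the dyadic weight $\norm{\a}^{-2}$ is well-matched to the two-term structure of that lemma, producing one geometric and one $\log$-weighted geometric sum, both of which converge.
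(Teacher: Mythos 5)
Your proof is correct and follows essentially the same route as the paper's: both dyadically decompose $\AA j_1$ into shells where $\norm\a$ is comparable to a fixed scale, apply \eqref{fi} with $B$ equal to the shell length, and sum the resulting geometric and $\log$-weighted geometric series. The only difference is cosmetic indexing (you start shells at $\cb P_j^{-3}$ with $k=0$; the paper starts at $P_j^{-3}$ with $k=-3$).
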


\begin{proof}
	We divide the interval $\AA j_1$, defined under \eqref{BBj}, dyadically as follows
	\begin{equation}\label{dyadic:set}
	\AA j _1 \subseteq \bigcup_{k=-3}^{\lfloor 3\log_2 P_j\rfloor} 
	\{\a\in\TT:\ 2^k P_j^{-3}<\norm\a\leq 2^{k+1}P_j^{-3}\}
	\end{equation}
	into pairs of intervals of length at most $2^k P_j^{-3}$. Hence by \eqref{fi} we have
	\begin{equation}\label{dyadic:integral1}
	\int_{\AA j_1} \abs{f_j}^2 \norm{\a}^{-2} d\a 
	\ll
	\sum_{k=-3}^{\lfloor 3\log_2 P_j\rfloor} 
	P_j^{-2}(2^k + \log P_j) (2^{-2k}P_j^6)
	\end{equation}
	that gives \eqref{fB}.
\end{proof}

\begin{lemma}Let $\AA 2_1:=\{\a\in\R/\Z:\ \norm\a>\cb P_2^{-3} \}$ as per \eqref{BBj}, then
	\begin{equation}
	\label{HB}
	\int_{\AA 2_1} H_1 \norm \a ^{-4} d\a
	\ll P_1^{-2} P_2^{12}\log P_1 .
	\end{equation}
\end{lemma}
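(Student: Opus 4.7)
The plan is to proceed in analogy with the proof of \eqref{fB}, by dyadically decomposing $\AA 2 _1$ and controlling $\int H_1\, d\a$ uniformly on the resulting subintervals. The crucial observation is that the Fourier expansion $H_1(\a) = \sum_n r'(n) e(\a n)$ has vanishing zeroth coefficient: since $(x+h)^4 - x^4 > 0$ whenever $h \geq 1$, the counting function $r'(n)$ from \eqref{def:r'} satisfies $r'(n)=0$ for all $n \leq 0$. This replaces the would-be main term $B \cdot r'(0)$ by $0$ and yields, for any interval $I=(A,A+B)\subseteq\TT$,
\begin{equation*}
\int_I H_1\, d\a = \sum_{n>0} r'(n) \int_I e(\a n)\, d\a \ll \sum_{n>0} \frac{r'(n)}{n},
\end{equation*}
a bound that is independent of the length $B$ and of the location $A$.

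Next, I would estimate $\sum_{n>0} r'(n)/n$ using the fact that $r'(n) \leq r(n,P_1)$, which follows because every pair contributing to $r'(n)$ corresponds to $x,x'=(x+h)\in(\c P_1,P_1]$ with $(x')^4-x^4=n$. Combined with $r(n,P_1)=0$ outside $(\c P_1^3, \tfrac{15}{16}P_1^4]$ and the partial sum estimate \eqref{rAB}, a dyadic decomposition of $n$ at the scales $2^k P_1^3$ (for $-1\leq k \ll \log P_1$) gives
\begin{equation*}
\sum_{n>0}\frac{r'(n)}{n} \ll \sum_k \frac{1}{2^k P_1^3}\bigl(2^k P_1^3 \cdot P_1^{-2} + P_1\bigr) \ll P_1^{-2}\log P_1,
\end{equation*}
which is the desired uniform bound on $\int_I H_1\, d\a$.

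Finally, I would cover $\AA 2 _1$ by the dyadic pairs of intervals $I_k=\{\a\in\TT : 2^k P_2^{-3} < \norm\a \leq 2^{k+1}P_2^{-3}\}$ for $-3 \leq k \leq \lfloor 3\log_2 P_2\rfloor$, on each of which $\norm\a^{-4} \ll 2^{-4k}P_2^{12}$. Applying the previous step on each $I_k$ (which consists of two intervals of length $2^k P_2^{-3}$) and summing,
\begin{equation*}
\int_{\AA 2 _1} H_1 \norm\a^{-4}\, d\a \ll P_1^{-2}\log P_1 \sum_{k\geq -3} 2^{-4k}P_2^{12} \ll P_1^{-2}P_2^{12}\log P_1,
\end{equation*}
since the geometric series $\sum_{k\geq -3}2^{-4k}$ converges. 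The main obstacle to anticipate is the uniform-in-$k$ step: a naive bound using $\abs{H_1}\ll P_1^{-2}P_2^4$ on $I_k$ would produce an extra factor of $2^k P_2$, which would ruin the summation at the large-$k$ end of the range; it is precisely the vanishing of $r'(0)$ that removes this obstruction.
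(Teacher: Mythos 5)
Your proof is correct and follows essentially the same path as the paper: decompose $\AA 2_1$ dyadically, observe that $H_1 = \sum_{n\geq 1} r'(n)e(\a n)$ has no $n=0$ term so that $\int_I H_1\,d\alpha \ll \sum_{n\geq 1} r'(n)/n$ uniformly over all intervals $I$, estimate that sum by $\ll P_1^{-2}\log P_1$ via a partial-sum bound, and sum the resulting geometric series. The only small variation is that you bound $r'(n)\leq r(n,P_1)$ and reuse \eqref{rAB}, whereas the paper derives the analogous estimate \eqref{r'AB} for $r'$ directly; both routes work. One minor slip in your closing remark: the naive pointwise bound $|H_1|\ll P_1^{-2}P_2^4$ would still give a convergent sum over $k$ (dominated by the small-$k$ end, not the large-$k$ end), but the total would be $\ll P_1^{-2}P_2^{13}$, losing a factor of $P_2/\log P_1$ — so the observation that the vanishing of $r'(0)$ is what saves the bound is right, only the "large-$k$" attribution is off.
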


\begin{proof}
	We proceed as in the proof of \eqref{fi}. First, we notice that for every $A,B$
	\begin{equation}\label{Hi}
	\int_{A}^{A+B} H_1 d\a 
	\ll P_1^{-2} \log P_1.
	\end{equation}
	Indeed, $H_1(\a) = \sum_{n=1}^{\infty} r'(n) e(\a n)$ where
	\begin{equation}\label{def:r'}
	r'(n) := \#\left\{(h,x)
	\left|
	\begin{matrix} 
	(x+h)^4 - x^4 =  n 
	\\
	1\leq h\leq \ch
	P_1^{-3}P_2^4
	\\  
	\c P_1< x,x+h \leq P_1
	\end{matrix}
	\right.   \right\}.
	\end{equation}
	Therefore
	\begin{equation}\label{Hi:2} 
	\int_{A}^{A+B} H_1 d\a = \sum_{n=1}^{\infty} r'(n) \int_{A}^{A+B}e(\a n) d\a \ll \sum_{n=1}^{\infty} \frac{r'(n)}{n}
	\end{equation}
	as in \eqref{eq:fi1}-\eqref{eq:fi3}. It is clear from \eqref{def:r'} that 
	\begin{align}
		r'(n)& = 0 
		& & 
		\text{for $n\leq \tfrac 1 2 P_1^3$ or $n>\tfrac{15}{16} P_1^4$}
	\end{align}
	and arguing as for \eqref{rAB} we get
	\begin{equation}\label{r'AB}
	\sum_{A<n\leq A+B} r'(n)\leq 2BP_1^{-2} + O(P_1).	
	\end{equation}
	Then \eqref{Hi} follows from \eqref{Hi:2} and \eqref{r'AB} as in \eqref{r/n}. 
	Now we divide $\AA 2_1$ dyadically as in \eqref{dyadic:set} and we obtain
	\begin{equation}
	\int_{\AA 2_1} H_1 \norm{\a}^{-4} d\a 
	\ll
	\sum_{k=-3}^{\left\lfloor 3\log_2 P_2\right\rfloor} 
	P_1^{-2} \log P_1 \cdot 2^{-4k}P_2^{12}.
	\end{equation}
	The estimate \eqref{HB} follows.
\end{proof}

\subsection{From $S$ to $T$, through $V$ and $W$}\label{sec:STVW}

For every measurable set $\BB\subseteq\TT$, we recall the definition of teh integral $S(\PP,\BB)$ and we define $T(\PP,\BB)$ as follows:
\begin{align}
	S(\PP,\BB)&:=\int_\BB \abs{f_1f_2 f_3 f_4 g}^2 d\a, \\ 
	T(\PP,\BB)&:=\int_\BB H_1 \abs{f_2 f_3 f_4 g}^2 d\a.
\end{align}

We also recall that $S,S_0,S_1$ denote $S(\PP,\BB)$ respectively for $\BB=\TT,\AA 1_0,\AA 1_1$. 
We define $T=T_0+T_1$ analogously, but for the new partition $\TT=\AA 2_0\sqcup \AA 2_1$, where, as in \eqref{BBj}:
\begin{align}\label{BB2}
	\AA 2_0 &=\{\a: \norm\a\leq \cb P_2^{-3}\} & \AA 2_1 &=\{\a: \norm\a> \cb P_2^{-3}\} .
\end{align}

In view of \eqref{eq:step2:s1}, the goal of this section is to prove that 
\begin{equation}\label{eq:step2:s1:bis}
	\abs{S_0-2T_0}\ll Y^2 P^{ \tfrac {7242}{4096}} \approx Y^2 P^{1.768}.
\end{equation}
Notice that $\AA 1_0\subseteq \AA 2_0$ and that the approximations $g\approx Y$ and $f_j\approx \nu_j$ for $2\leq j\leq 4$ are valid on $\AA 2_0$, because $Y^{-2}\geq \tfrac 1 8  P_2^{-3}$ by \eqref{hyp:gamma1}. 
We introduce the following integrals

\begin{align}
	V(\PP,\BB)&:= Y^2 \int_\BB \abs{f_1 \nu_2 \nu_3 \nu_4}^2 d\a, \\ 
	W(\PP,\BB)&:= Y^2 \int_\BB H_1 \abs{\nu_2 \nu_3 \nu_4}^2 d\a,
\end{align}

then we define $V=V_0+V_1$ (resp. $W=W_0+W_1$)  using $V(\P,\BB)$ (resp. $W(\P,\BB)$) and the partition $\TT=\AA 1 _0\sqcup \AA 1 _1$  (resp. $\TT=\AA 2 _0\sqcup \AA 2 _1$). 
Then we have $\abs{S_0-2T_0}\leq \ES$, where 

\begin{equation}\label{ES}
	\ES := \abs{S_0-V_0} + \abs{V_1} + \abs{V-2W}+\abs{2W_1}+\abs{2W_0-2T_0}.
\end{equation}

We now dive into estimating the above five terms. 
%

\begin{proposition}\label{V1W1}
	\begin{align}
		V_1 &\lls Y^2 P_1^{4} P_2^{-6} P_3^{2} P_4^{2} &&=  Y^2 P^{\tfrac{6218}{4096}+\epsilon}	,
		\label{V1}
		\\
		W_1 &\lls Y^2 P_1^{-2} P_2^{6} P_3^{-6} P_4^2  &&=  Y^2 P^{-\tfrac{54}{4096}+\epsilon}	.
		\label{W1}
	\end{align}
\end{proposition}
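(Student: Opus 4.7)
The plan is to bound each factor $\abs{\nu_j}^2$ pointwise using either the trivial estimate \eqref{nuT} or the decay estimate \eqref{nu1}, distributing the resulting powers of $\norm{\a}^{-1}$ in such a way that what remains of the integrand can be absorbed by the integral bounds \eqref{fB} and \eqref{HB} that are already at our disposal. In each of the two cases, this reduction is completely forced by the exponent of $\norm{\a}$ appearing in those two lemmas.

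For $V_1$, the integrand on $\AA 1_1$ is $Y^2 \abs{f_1 \nu_2 \nu_3 \nu_4}^2$. Since \eqref{fB} with $j=1$ controls $\int_{\AA 1_1} \abs{f_1}^2 \norm{\a}^{-2} d\a$, I need to extract exactly one factor of $\norm{\a}^{-2}$ from the product $\abs{\nu_2 \nu_3 \nu_4}^2$. The natural choice is to apply \eqref{nu1} to $\nu_2$, which gives $\abs{\nu_2}^2 \ll P_2^{-6}\norm{\a}^{-2}$, and the trivial estimate \eqref{nuT} to $\nu_3$ and $\nu_4$, giving $\abs{\nu_3 \nu_4}^2 \ll P_3^2 P_4^2$. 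Pulling these constants out of the integral and invoking \eqref{fB} yields $V_1 \ll Y^2 P_2^{-6} P_3^2 P_4^2 \cdot P_1^4 \log P_1$, which, absorbing the logarithm into $P^{\epsilon}$, is the first claimed estimate.

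For $W_1$, the integrand on $\AA 2_1$ is $Y^2 H_1 \abs{\nu_2 \nu_3 \nu_4}^2$. This time the relevant lemma \eqref{HB} controls $\int_{\AA 2_1} H_1 \norm{\a}^{-4} d\a$, so I must extract a full factor $\norm{\a}^{-4}$ from the $\nu$'s. I achieve this by applying \eqref{nu1} to both $\nu_2$ and $\nu_3$, obtaining $\abs{\nu_2 \nu_3}^2 \ll P_2^{-6} P_3^{-6} \norm{\a}^{-4}$, while keeping $\abs{\nu_4}^2 \ll P_4^2$ trivially via \eqref{nuT}. Inserting these bounds and invoking \eqref{HB} then gives
\begin{equation*}
W_1 \ll Y^2 P_2^{-6} P_3^{-6} P_4^2 \cdot P_1^{-2} P_2^{12} \log P_1 \lls Y^2 P_1^{-2} P_2^{6} P_3^{-6} P_4^2,
\end{equation*}
which is the second claim.

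I expect no genuine obstacle beyond this bookkeeping: once one insists on using only the integral estimates \eqref{fB} and \eqref{HB}, the distribution of trivial versus decay bounds among the $\nu_j$'s is essentially the only option. The final numerical exponents $P^{6218/4096}$ and $P^{-54/4096}$ claimed in \eqref{V1} and \eqref{W1} can then be checked by a direct substitution from \eqref{eq:P}.
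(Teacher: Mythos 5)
Your proof is correct and follows exactly the same strategy as the paper: apply \eqref{nu1} to $\nu_2$ (and also $\nu_3$ in the case of $W_1$), bound the remaining $\nu_j$ trivially via \eqref{nuT}, and then invoke \eqref{fB} respectively \eqref{HB} to finish.
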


\begin{proof}
	By \eqref{nu1} applied to $\nu_2$ and \eqref{nuT} applied to $\nu_3,\nu_4$ we have
	\begin{equation}
	V_1 \ll Y^2 P_2^{-6}P_3^2P_4^2\int_{\AA 1 _1} \abs{f_1}^2\norm\a ^{-2} d\a.
	\end{equation}
	which gives \eqref{V1} by \eqref{fB}. 
	By \eqref{nu1} applied to $\nu_2,\nu_3$ and \eqref{nuT} for $\nu_4$ we have
	\begin{equation}
	W_1 \ll Y^2 P_2^{-6}P_3^{-6}P_4^2\int_{\AA 2 _1} \abs{H_1}\norm\a ^{-4} d\a.
	\end{equation}
	The estimate \eqref{W1} follows by \eqref{HB}.
\end{proof}


\begin{proposition}\label{S0T0}
	\begin{align}
		S_0-V_0 
		&\ll Y^2 P_1^{-2} P_2^{2} P_3^{2} P_4 
		&&=  Y^2 P^{\tfrac{6069}{4096}}	,
		\label{SV} \\
		T_0-W_0 
		&\lls Y^2 P_1^{-2} P_2^{2} P_3^{2} P_4 
		&&=  Y^2 P^{\tfrac{6069}{4096}+\epsilon}	.
		\label{TW}
	\end{align}
\end{proposition}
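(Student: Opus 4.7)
The plan is to derive both bounds from a single pointwise expansion of the common factor
\[
\Delta := \abs{f_2 f_3 f_4 g}^2 - Y^2 \abs{\nu_2 \nu_3 \nu_4}^2.
\]
By \eqref{f0}, \eqref{g0} and \eqref{hyp:gamma1}, we have $f_j = \nu_j + O(1)$ for $j\in\{2,3,4\}$ and $g = Y + O(1)$ throughout $\AA 2 _0$ (and thus also on $\AA 1 _0 \subseteq \AA 2 _0$). The reverse triangle inequality gives $\abs{f_j}^2 = \abs{\nu_j}^2 + O(\mu_j)$ and $\abs g^2 = Y^2 + O(Y)$; a term-by-term expansion of the product then yields
\[
\abs\Delta \ll Y^2\bigl(\mu_2\mu_3^2\mu_4^2 + \mu_2^2\mu_3\mu_4^2 + \mu_2^2\mu_3^2\mu_4\bigr),
\]
where the nominal $Y\mu_2^2\mu_3^2\mu_4^2$ term has been absorbed into $Y^2\mu_2^2\mu_3^2\mu_4$ via $\mu_4 \leq P_4 \leq Y$.

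For \eqref{SV}, the exponents in \eqref{eq:P} give $\AA 1 _0 \subseteq \{\alpha : \norm\alpha \leq P_j^{-4}\}$ for $j\in\{3,4\}$, so $\mu_j \ll P_j$ pointwise on $\AA 1 _0$ for every $j\in\{2,3,4\}$, and the largest of the three monomials is $\mu_2^2\mu_3^2\mu_4 \ll P_2^2 P_3^2 P_4$. Hence
\[
\abs{S_0 - V_0} \ll Y^2 P_2^2 P_3^2 P_4 \int_{\AA 1 _0} \abs{f_1}^2 d\alpha,
\]
and the remaining integral is $\ll P_1^{-2}$: on $\AA 1 _0$ we have $\abs{f_1}^2 \leq 2\abs{\nu_1}^2 + O(1)$ by \eqref{f0}, so the claim follows from $\int_\TT \abs{\nu_1}^2 d\alpha \ll P_1^{-2}$ (this is \eqref{vii}) together with the measure bound $\abs{\AA 1 _0} \ll P_1^{-3}$.

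For \eqref{TW}, we use the trivial pointwise bound $\abs{H_1} \ll P_1^{-2} P_2^4$ from \eqref{H1T} and the inclusion $\AA 2 _0 \subseteq \{\alpha : \norm\alpha \leq P_4^{-4}\}$ (since $P_2^{-3} < P_4^{-4}$ by \eqref{eq:P}), which gives $\mu_4 \ll P_4$ pointwise on $\AA 2 _0$. The leading contribution comes from $\mu_2^2\mu_3^2\mu_4$; bounding $\mu_3 \ll P_3$ pointwise as well yields
\[
Y^2 \int_{\AA 2 _0} \abs{H_1} \mu_2^2\mu_3^2\mu_4\, d\alpha \ll Y^2 P_1^{-2} P_2^4 \cdot P_3^2 P_4 \int_{\AA 2 _0} \mu_2^2\, d\alpha \ll Y^2 P_1^{-2} P_2^2 P_3^2 P_4,
\]
where $\int_{\AA 2 _0} \mu_2^2\, d\alpha \ll P_2^{-2}$ follows from $\mu_2^2 \leq 2\abs{\nu_2}^2 + 2$, the bound \eqref{vii}, and $\abs{\AA 2 _0} \ll P_2^{-3}$. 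The remaining monomials $\mu_2^2\mu_3\mu_4^2$ and $\mu_2\mu_3^2\mu_4^2$ are handled analogously, producing strictly smaller contributions (ratios of $P_4/P_3$ and $P_4/P_2$ respectively); for the last of them we invoke $\int_{\AA 2 _0} \mu_2\, d\alpha \lls P_2^{-3}$, which comes from \eqref{vi}.

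The main obstacle is the bound allocation in \eqref{TW}. The estimate $\abs{H_1} \ll P_1^{-2} P_2^4$ is essentially sharp at $\alpha = 0$, so no savings are available from the $H_1$ factor; all required gain must come from the mean-square bound $\int \mu_2^2\, d\alpha \ll P_2^{-2}$, and for this reason we retain $\mu_2$ as a factor to be integrated in mean square rather than bounding it trivially by $P_2$.
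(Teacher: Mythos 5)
Your proof is correct and follows essentially the same route as the paper's: a term-by-term expansion of $\abs{f_2f_3f_4g}^2 - Y^2\abs{\nu_2\nu_3\nu_4}^2$ giving the three monomials $\mu_2\mu_3^2\mu_4^2$, $\mu_2^2\mu_3\mu_4^2$, $\mu_2^2\mu_3^2\mu_4$ (after absorbing the $Y$-error via $P_4\leq Y$), followed by trivial bounds on $H_1$, $\mu_3$, $\mu_4$ and mean estimates for $\mu_2$ (via \eqref{vii}, \eqref{vi}) and $\abs{f_1}$ over the central box. One small remark: the detour through the inclusions $\AA 1_0\subseteq\{\norm\alpha\leq P_j^{-4}\}$ is unnecessary, since $\mu_j\ll P_j$ holds on all of $\TT$ directly from \eqref{nuT}.
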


\begin{proof}
	Analogously to the computation in \cref{R0R0}, by \eqref{f0} and \eqref{g0}  we have
	\begin{equation}
	S_0-V_0 \ll
	\int_{\AA 1 _0}
	\left(
		 Y\abs{f_1^2\mu_2^2\mu_3^2\mu_4^2}
		+ Y^2\abs{f_1^2}(\abs{\mu_2\mu_3^2\mu_4^2} 
		+ \abs{\mu_2^2\mu_3\mu_4^2} 
		+ \abs{\mu_2^2\mu_3^2\mu_4}) 
	\right)d\a,
	\end{equation}
	where $\mu_j:=\max\{\abs{\nu_j},1\}$. 
	We use the trivial estimate \eqref{nuT} for $\mu_2,\mu_3,\mu_4$ and we use that $f_1\ll\mu_1$ on $\AA 1_0$, by \eqref{f0}, to get
	\begin{equation}
	S_0-V_0 \ll
	Y^2P_2^2 P_3^2 P_4^2 \left(\tfrac 1 Y + \tfrac 1 {P_2} + \tfrac 1 {P_3} + \tfrac 1 {P_4}\right) 
	\int_{\AA 1 _0} \abs{\mu_1}^2 d\a.
	\end{equation}
	The integral to the right is $\ll P_1^{-2}$ by \eqref{vii} and the fact that $\int_{\AA 1_0} 1 d\a\ll {P_1^{-3}}$. 
	Since moreover $Y\geq P_4$, \eqref{SV} follows.  
	
	Similarly, since $P_j^3\leq P_2^3$ for all $j\geq 2$ and since $Y^2\leq 8 P_2^3$, we have by \eqref{f0} and \eqref{g0} 
	\begin{equation}
	T_0-W_0 \ll
	\int_{\AA 2 _0}
	\left(
	Y\abs{H_1\mu_2^2\mu_3^2\mu_4^2}
	+ Y^2\abs{H_1}(\abs{\mu_2\mu_3^2\mu_4^2} 
	+ \abs{\mu_2^2\mu_3\mu_4^2} 
	+ \abs{\mu_2^2\mu_3^2\mu_4}) 
	\right) 
	d\a.
	\end{equation}
	We apply \eqref{nuT} to $\mu_3,\mu_4$ and \eqref{H1T} to $H_1$ to get
	\begin{equation}
	T_0-W_0 \ll
	Y^2P_1^{-2}P_2^4 P_3^2 P_4^2 
	\left[
	\left(\tfrac 1 Y + \tfrac 1 {P_3} + \tfrac 1 {P_4}\right) 
	\int_{\AA 2_0} \abs{\mu_2}^2 d\a + \int_{\AA 2_0} \abs{\mu_2} d\a
	\right].
	\end{equation}
	The first integral is $\ll P_2^{-2}$ by \eqref{vii} while the second integral is $\lls P_2^{-3}$ by \eqref{vi}. 
	The expression inside the square brackets is therefore $\ll P_2^{-2} P_4^{-1}$, hence we get \eqref{TW}. 
\end{proof}

Finally, $V-2W$ was estimated in \cref{VW} and it turns out to be the main term in the right-hand side of \eqref{ES}. 
We conclude that 
\begin{equation}\label{ES:S0T0}
	\abs{S_0-2T_0} \leq \ES\ll Y^2 P^{\tfrac {7242}{4096} }.
\end{equation}


\section{Final estimates via the circle method}
\label{sec:T}

In this section we complete the proof of our main quantitative result, with a full application of the circle method and an induction on the number of variables in the underlying diophantine equation. 

\subsection{Induction on the number of variables}

At this point, we still need to estimate the terms $\abs{2T_1}$ and $\abs{S-2T}$ in \eqref{eq:step2:s1}. We already commented briefly on the fact that $S-2T$ counts the number of solutions to the equation \eqref{eq:sysS}, subject to \eqref{eq:subS}, together with $x_1'=x_1$. 
In particular if by $\S j$ we denote the number of solutions to the equation
 \begin{equation}\label{eq:sysS:2}
 x_j^4 + \dots + x_4 ^4+ y = {x'_j}^4 + \dots + {x'_j} ^4+ y'
 \end{equation}
 subject to 
 \begin{equation}\label{eq:subS:2}
 0\less y,y'\leq Y,\quad \c P_i\less x_i,x'_i \leq P_i\quad (j\leq i \leq 4),
 \end{equation} 
 we have $S-2T\asymp P_1 \S 2$. 
 Now, \cref{eq:sysS:2} has at least the ``diagonal'' solutions given by $y=y'$ and $x_i=x'_i$ for $j\leq i\leq 4$, hence 
 \begin{equation}\label{S:diagonal}
 	\S j \gg Y\prod_{i=j}^4 P_j.
 \end{equation}
 In particular, $S-2T\gg P_1 P_2 P_3 P_4 Y$ and we cannot hope for a better estimate of this term.
 In the remainder of the section we will prove, by backward induction on $j$, that in fact 
 \begin{equation}\label{S:diagonal:infact}
 \S j \lls Y\prod_{i=j}^4 P_j
 \end{equation}
 for $2\leq j\leq 4$ and then we will show that 
\begin{equation}\label{step3:TST}
	\abs{2T_1} + \abs{S-2T} \lls P_1P_2P_3P_4 Y =  YN^{1-\gamma_0+\epsilon/4},
\end{equation}
where $\gamma_0=4059/16384$. 
Since by 
\eqref{eq:bessel} we have 
\begin{equation}
		\sum_{\c N<n\leq N} \abs{R(n)-\RR(n)}^2 \leq N\ER^2 + 2\abs{S_0-2T_0} + 4\abs{4T_1} + 2\abs{S-2T},
\end{equation}
we finally get \cref{final:main} by using 
\eqref{step1:error}, \eqref{ES:S0T0} and \eqref{step3:TST}.  
The base step of induction is the following estimate of $\S 4$.

\begin{proposition}\label{S4}
	\begin{equation}
		\S 4 \ll P_4 Y.
	\end{equation}
\end{proposition}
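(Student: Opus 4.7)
The plan is to prove the proposition directly from the underlying diophantine equation, without invoking the circle method. By definition $\S 4$ counts the integer quadruples $(x_4, x_4', y, y')$ satisfying
$$x_4^4 + y = {x_4'}^4 + y', \qquad \c P_4 < x_4, x_4' \leq P_4, \qquad 0 \leq y, y' < Y.$$
Rewriting this as $x_4^4 - {x_4'}^4 = y' - y$, the right-hand side lies in the open interval $(-Y, Y)$.

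For the left-hand side, if $x_4 \neq x_4'$ then the factorization $a^4 - b^4 = (a-b)(a+b)(a^2+b^2)$ gives
$$\abs{x_4^4 - {x_4'}^4} \geq 4\min(x_4, x_4')^3 \cdot \abs{x_4 - x_4'} > 4\bigl(\c P_4\bigr)^3 = \tfrac 1 2 P_4^3,$$
since $x_4, x_4' > \c P_4$ and $\abs{x_4 - x_4'} \geq 1$. Any off-diagonal solution would therefore require $Y > \tfrac 1 2 P_4^3$. However, the parameter choices \eqref{eq:gamma} and \eqref{eq:P} give $Y = P^{4\gamma}$ with $4\gamma \leq 4992/4096$, while $P_4^3 = P^{6591/4096}$; since $4992 < 6591$, one has $Y = o(P_4^3)$ as $P \to \infty$, so no off-diagonal solutions exist for $P$ sufficiently large.

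Therefore every solution counted by $\S 4$ must satisfy $x_4 = x_4'$ and consequently $y = y'$. The number of such diagonal solutions is bounded by the number of pairs $(x_4, y)$ in the prescribed ranges, which is $\ll P_4 Y$, yielding the proposition. The argument presents essentially no obstacle; the only nontrivial ingredient is the numerical inequality $Y = o(P_4^3)$, which amounts to the diminishing-ranges construction leaving $P_4$ large enough that $Y$ falls below every nonzero gap $\abs{x_4^4 - {x_4'}^4}$ between fourth powers in $(\c P_4, P_4]$. The subsequent inductive steps for $\S 3$ and $\S 2$, where the analogous comparison fails, will instead require the full apparatus of the circle method.
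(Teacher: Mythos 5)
Your proof is correct and takes essentially the same approach as the paper: both show that $Y < \tfrac 1 2 P_4^3$ forces $x_4 = x_4'$ (hence $y = y'$), so $\S 4$ reduces to counting diagonal solutions, of which there are $\ll P_4 Y$. The only cosmetic difference is that you derive the gap $\abs{x_4^4 - {x_4'}^4} > \tfrac 1 2 P_4^3$ via the factorization $a^4-b^4=(a-b)(a+b)(a^2+b^2)$, whereas the paper compares directly with $(x+1)^4 - x^4$.
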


\begin{proof}
	The number $\S 4$ counts the solutions to the equation
	\begin{equation}\label{eq:S4}
	x^4 + y = {x'}^4 + y'
	\end{equation}
	subject to $\c P_4< x,x'\leq P_4$ and $1\leq y,y'\leq Y$.
	For every such solution, say with $x\leq x'$, we have that
	\begin{equation}
	{x'}^4-x^4 \leq Y \leq \tfrac 1 2 P_4^3
	\end{equation}
	and so ${x'}^4-x^4 < {(x+1)}^4-x^4$. 
	This implies that \eqref{eq:S4} has only the diagonal solutions $x=x'$ and $y=y'$, therefore 
	\begin{equation}
	\S 4 = (\tfrac 1 2 P_4 + O(1)) (Y + O(1)).
	\end{equation}
\end{proof}

\subsection{Major arcs, central arc and minor arcs}

The equation \eqref{eq:sysS:2}, for $j\leq 3$ is transformed via the substitution $x_j'=x_j+h$, like we did in \cref{sec:T:def}. 
To the resulting equation
\begin{equation}\label{eq:sysT:2}
{(x_j+h)}^4 - x_j^4 = (x_{j+1}^4 -(x_{j+1}')^4) + \dots+ (x_4^4 -(x_4')^4) + (y - y')
\end{equation}
 additionally constrained by $h>0$, we attach the integrals
\begin{equation}\label{def:T:j}
\T j (\PP,\BB):=\int_\BB H_j\abs{g\prod_{i=j+1}^4 f_i}^2 d\a,
\end{equation}
where $\BB\subseteq\TT$ is a measurable set and where $$H_j:=H(\a,P_j,\ch P_j^{-3}P_{j+1}^4)$$ is given by \eqref{def:H}. 
The solutions to \eqref{eq:sysT:2} corresponding to $h=0$ are counted by 
\begin{equation}\label{scheme:ST}
	\S j - 2 \T j = (\tfrac 1 2 P_j + O(1)) \S {j+1}. 
\end{equation}
We are going to estimate the integrals \eqref{def:T:j} with the circle method. 

For every $1\leq j\leq 3$ and every pair of coprime integers $q,a$ with $q\geq 1$ we form 
\begin{equation}
\label{def:Mqa}
	\Mi j (q,a) := \{\a \in \TT: \norm{\a-a/q} \leq q^{-1} P_j P_{j+1}^{-4}\},
\end{equation}
and we define the $j$-th set of \emph{major arcs} by
\begin{equation}\label{eq:major}
\Mi j := \bigcup_{q=2}^{P_j} \bigcup_{\gcdaq}   \Mi j (q,a) .
\end{equation}
Notice that the intervals in the definition of $\Mi j$ are disjoint because for every two rational numbers $a/q$, $A/Q$ with denominators $q\leq Q\leq P_j$ we have
\begin{equation}
	\abs{\frac A Q - \frac a q}\geq \frac 1 {qP_j} \geq \frac 1 q P_j P_{j+1}^{-4} + \frac 1 Q P_j P_{j+1}^{-4}
\end{equation}
by \eqref{P:range}. 
Notice that in the definition of $\Mi j$ we excluded the major arc centered at zero. 
For $j\in\{2,3\}$ we denote the $j$-th \emph{central arc} by $\Ni j := \Mi j (1,0)$ and we define the $j$-th set of  \emph{minor arcs} $\mi j $ so that $\TT = \Ni j \sqcup \Mi j \sqcup \mi j$ is a partition. 
For $j=1$ we define the central arc by
\begin{equation} \label{def:N1}
	\Ni 1 := \{\a: \cb P_2^{-3}<\abs{\a} \leq P_1 P_{2}^{-4}\}
	= \Mi 1 (1,0) \cap \AA 2_1
\end{equation}
and consider the partition $\AA 2 _1 = \Ni 1 \sqcup \Mi 1 \sqcup \mi 1$. 
For every $1\leq j\leq 3$ we let $ \TN j, \TM j, \Tm j$ denote $\T j ( \PP,\BB)$ respectively for $\BB = \Ni j, \Mi j, \mi j$. 
Finally, we define $\T 1:= T_1$ and $\T j:=\T j(\PP,\TT)$ for $j\in\{2,3\}$, so that 
\begin{align}\label{T:circle:method}
	\T j 	& =  \TM j + \TN j + \Tm j 			& &(1\leq j \leq 3).
\end{align}

\subsection{Estimates for $H_j$ and the minor arc contribution}

It turns out that the minor arc component $\Tm j$ is the dominant term in $\T j$ for all $1\leq j\leq 3$. 
Nevertheless, we are going to estimate it crudely for each $1\leq j\leq 3$, as follows:
\begin{equation}\label{minor:brutal}
	\abs{\Tm j} \leq \left(\sup_{\a \in\mi j} \abs{H_j}\right) \int_{\mi j} \abs{f_{j+1}\cdots f_4 g}^2 d\a \leq \left(\sup_{\a \in\mi j} \abs{H_j}\right) \S {j+1}.
\end{equation}

	Thus we now need to bound from above the absolute value of the exponential sum $H_j$. Such estimate is proved as in \cite[Lemma ]{Vaughan:Weil} using the Weyl differencing method:

\begin{lemma}\label{H:lemma}
	Let $H(\a,X,Z)$ be as in \eqref{def:H} with $Z\leq X$ and  $\abs{\a-a/q}\leq q^{-2}$ for some integers $a,q$. 
	Then we have, for all $\epsilon>0$:
	\begin{equation}
	H(\a,X,Z) \ll_\epsilon X^{1+\epsilon} Z (X^{-1} + q^{-1} + qX^{-3}Z^{-1})^{1/4},
	\end{equation}
	where the implied constant depends only on $\epsilon$.
\end{lemma}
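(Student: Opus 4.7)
The phase in $H(\a,X,Z)$ is
\begin{equation*}
\Delta(x,h) := (x+h)^4 - x^4 = 4hx^3 + 6h^2 x^2 + 4h^3 x + h^4,
\end{equation*}
a polynomial which is cubic in $x$ for each fixed $h$. Since the hypothesis $|\a-a/q|\leq q^{-2}$ provides rational approximation to $\a$ alone, the strategy, exactly that of Vaughan \cite{Vaughan:Weil}, is to reduce the cubic phase in $x$ to a linear one by two applications of the Cauchy--Schwarz inequality (``Weyl differencing'') in the $x$-variable, and then invoke the classical counting lemma for sums of the form $\sum_m\min(X,\|m\a\|^{-1})$.

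The first Cauchy--Schwarz, in $h$, gives
\begin{equation*}
|H|^2 \;\leq\; Z \sum_{1\leq h\leq Z}\biggl|\sum_{\c X<x\leq X-h} e\bigl(\a \Delta(x,h)\bigr)\biggr|^2.
\end{equation*}
Expanding the inner square as a double sum over $x$ and $x'=x+u_1$, the internal phase becomes $\a[\Delta(x+u_1,h)-\Delta(x,h)]$, which is of degree $2$ in $x$ with leading coefficient $12\a h u_1$. A second Cauchy--Schwarz, now in the pair $(h,u_1)$ (of which there are $O(ZX)$), followed by a further expansion with shift $u_2$, produces a phase linear in $x$ with leading coefficient $24\a h u_1 u_2$. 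The inner sum in $x$ is then bounded by $\min(X,\|24\a h u_1 u_2\|^{-1})$, yielding schematically
\begin{equation*}
|H|^4 \;\ll\; Z^3 X \sum_{1\leq h\leq Z}\sum_{|u_1|\ll X}\sum_{|u_2|\ll X} \min\bigl(X,\ \|24\,\a h u_1 u_2\|^{-1}\bigr).
\end{equation*}

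Setting $m:=24h u_1 u_2$, using the divisor bound to control the multiplicities by $O_\epsilon(X^\epsilon)$, and bounding the diagonal contribution ($m=0$) trivially by $O(ZX^2)$, the triple sum is $\ll_\epsilon X^\epsilon\bigl(ZX^2 + \sum_{1\leq m\leq CZX^2}\min(X,\|m\a\|^{-1})\bigr)$. Vaughan's classical estimate \cite[Lemma 2.2]{vaughan:book} gives, under the hypothesis on $\a$,
\begin{equation*}
\sum_{1\leq m\leq M}\min\bigl(X,\|m\a\|^{-1}\bigr)\;\ll\;\bigl(Mq^{-1}+1\bigr)(X+q\log q);
\end{equation*}
applying this with $M\asymp ZX^2$, collecting the resulting four terms, and extracting the fourth root yields the desired bound $H\ll_\epsilon X^{1+\epsilon}Z(X^{-1}+q^{-1}+qX^{-3}Z^{-1})^{1/4}$.

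\textbf{Main obstacle.} The delicate points are (i) the bookkeeping after the two differencings --- verifying that only the leading linear coefficient $24\a h u_1 u_2$ matters, since the lower-order $x$-monomials contribute only to the constant term and are absorbed by the modulus of the exponential --- and (ii) controlling the multiplicities of the map $(h,u_1,u_2)\mapsto 24 h u_1 u_2$ by the divisor bound and handling the $m=0$ diagonal separately. Once these are in place, the final balancing of the three terms inside the fourth root is automatic from Vaughan's lemma and the choice $M\asymp ZX^2$.
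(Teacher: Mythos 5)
Your proof is correct and is precisely the Weyl differencing argument that the paper delegates to the cited lemma of Vaughan: two Cauchy--Schwarz steps reduce the cubic-in-$x$ phase of $\Delta(x,h)$ to a linear one with coefficient $24\alpha h u_1 u_2$, the $m=0$ diagonal contributes $O(ZX^2)$, the divisor bound controls multiplicities, and the standard estimate for $\sum_{m\leq M}\min(X,\|m\alpha\|^{-1})$ with $M\asymp ZX^2$ produces exactly the three terms inside the fourth root (with the sub-dominant term $Z^3X^2$ absorbed into $Z^4X^3$ since $ZX\geq 1$, and the $\log q$ factors absorbed into $X^\epsilon$ after the harmless reduction to $q\ll X^4$, as the claimed bound is weaker than the trivial one for larger $q$). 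This matches the route the paper indicates, so there is nothing to add.
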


Since $H_j$ is a sum of terms with absolute value 1, it can be trivially estimated as 
\begin{equation}\label{HT}
	H_j(\a) \ll P_j^{-2} P_{j+1}^{4} = P_j^{5/4}
\end{equation}
for all $\a\in\TT$ and for each $1\leq j\leq 3$. 
From \cref{H:lemma} can deduce better pointwise estimates for $H_j$ in regions of interest to us. 
\begin{corollary}
	For all $1\leq j\leq 3$ and all $\a\in\Mi j (q,a)$ with coprime $q,a\leq P_j$ we have
	\begin{align}
		H_j(\a) &\lls  P_j^{-2} P_{j+1}^{4}\cdot q^{-1/4}.	
		\label{HM} 
	\end{align}
Moreover, for each $1\leq j\leq 3$ and all $\a\in\mi j$ we have 
	\begin{align} 
		H_j(\a) &\ll_\epsilon P_j^\epsilon P_j^{-2} P_{j+1}^{4}\cdot P_j^{-1/4} = P_j^{1+\epsilon}
		\label{Hm}. 
	\end{align}
\end{corollary}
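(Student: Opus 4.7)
My plan is to apply \cref{H:lemma} with $X=P_j$ and $Z=\ch P_j^{-3}P_{j+1}^4$, choosing in each case a Dirichlet approximation $a/q$ tailored to the location of $\a$. The hypothesis $Z\leq X$ of that lemma holds for $P$ large, since $P_{j+1}=P_j^{13/16}$ gives $Z=\ch P_j^{1/4}$. The task then reduces to verifying the Diophantine hypothesis $\abs{\a-a/q}\leq q^{-2}$ and to identifying which of the three quantities $X^{-1}$, $q^{-1}$, $qX^{-3}Z^{-1}$ dominates inside the fourth root of the conclusion.

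For the major arc bound \eqref{HM}, I would use the pair $(q,a)$ built into the definition of $\Mi j(q,a)$. The condition $\abs{\a-a/q}\leq q^{-2}$ follows from $\abs{\a-a/q}\leq q^{-1}P_j P_{j+1}^{-4}$ together with $q\leq P_j\leq P_{j+1}^4 P_j^{-1}=P_j^{9/4}$. After substitution, the three competing quantities become $P_j^{-1}$, $q^{-1}$, and $qP_{j+1}^{-4}$. Since $q\leq P_j\leq P_{j+1}^2$, both $P_j^{-1}$ and $qP_{j+1}^{-4}$ are at most $q^{-1}$, so $q^{-1/4}$ emerges as the dominant factor and the bound simplifies to \eqref{HM}.

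For the minor arc bound \eqref{Hm}, I would first apply Dirichlet's theorem with denominator $Q:=P_{j+1}^4 P_j^{-1}=P_j^{9/4}$ to produce coprime integers $a,q$ with $1\leq q\leq Q$ and $\abs{\a-a/q}\leq (qQ)^{-1}=q^{-1}P_j P_{j+1}^{-4}$. The main obstacle is ruling out $q\leq P_j$: if $2\leq q\leq P_j$ then $\a\in\Mi j(q,a)\subseteq \Mi j$, contradicting $\a\in\mi j$; if $q=1$, then for $j\in\{2,3\}$ one has $\a\in\Mi j(1,0)=\Ni j$, while for $j=1$ the inclusion $\mi 1\subseteq\AA 2_1$ combined with $\abs\a\leq P_1 P_2^{-4}$ places $\a$ inside $\Ni 1$ as defined in \eqref{def:N1}. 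Either conclusion is incompatible with $\a\in\mi j$, so $q>P_j$. With $P_j<q\leq P_j^{9/4}$ in hand, all three terms $X^{-1}$, $q^{-1}$, $qX^{-3}Z^{-1}$ are at most $P_j^{-1}$; indeed $qP_{j+1}^{-4}\leq QP_{j+1}^{-4}=P_j^{-1}$ by the choice of $Q$. The lemma then yields $H_j\ll_\epsilon P_j^{-2+\epsilon}P_{j+1}^4\cdot P_j^{-1/4}$, and collecting exponents via $P_{j+1}^4=P_j^{13/4}$ gives $P_j^{1+\epsilon}$, as required by \eqref{Hm}.
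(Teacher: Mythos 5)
Your proof is correct and follows essentially the same route as the paper: apply \cref{H:lemma} on $\Mi j(q,a)$ using the built-in rational $a/q$, and on $\mi j$ via Dirichlet's theorem with $Q=P_j^{-1}P_{j+1}^4$, then observe $q>P_j$. You have simply spelled out the bookkeeping (checking $\abs{\a-a/q}\leq q^{-2}$, identifying the dominant term inside the fourth root, and the case analysis on $q=1$ versus $2\leq q\leq P_j$) that the paper leaves implicit.
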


\begin{proof}
	If $\a\in\Mi j (q,a)$ we apply \cref{H:lemma} and we get \eqref{HM} from $q\leq P_j$ and \eqref{P:range}.  
	Dirichlet's approximation theorem \cite[Lemma 2.1]{vaughan:book} says that for every $\a\in\R$ and every $Q\geq 1$ there are integers $a,q$ with $q\leq Q$ such that $\abs{\a-a/q}\leq 1/(qQ)$. 
	If $\a\in\mi j$ we apply Dirichlet's theorem with $Q=P_j^{-1}P_{j+1}^4$. The corresponding fraction $a/q$ satisfies $q>P_j$ by definition of $\mi j$ and so \cref{H:lemma} gives \eqref{Hm}. 
\end{proof}

\begin{remark}\label{rmk:HN1}
	By the same method, applying Dirichlet's theorem with $Q=P_2^3$, it is possible to prove that
	\begin{equation}
		H_1(\a) \lls P_1^{-2} P_{2}^{4}\cdot P_2^{-1/4}	
				\label{HN1} 
	\end{equation}
for $\a\in\Ni 1$. 
However, the trivial estimate $H_1(\a) \ll P_1^{-2} P_{2}^{4}$ will be sufficient for us in the treatment of the central arc $\Ni 1$.
\end{remark}

Focusing in particular on the minor arc estimate, for all $1\leq j\leq 3$ we get
\begin{equation}\label{scheme:minor}
	\Tm j \lls P_j \S {j+1}
\end{equation}
from \eqref{minor:brutal} and \eqref{Hm}. 
Combining \eqref{scheme:ST}, \eqref{T:circle:method} and \eqref{scheme:minor} we deduce that 
\begin{align}
	\S j &\lls P_j\S {j+1} + \TM j + \TN j & (2\leq j\leq 3),\\
	T_1 &\lls P_1\S {2} + \TM 1 + \TN 1. & 
\end{align}
This induction scheme, together with \eqref{scheme:ST} for $j=1$ and the base step \eqref{S4}, shows in particular that
%
%
\begin{equation}
	\abs{2T_1} + \abs{S-2T}\lls ( E_\mm + E_\MM + E_\NN ),
\end{equation} 
where
\begin{align}
	E_\MM &:= \abs{\TM 1} + P_1 \abs{\TM 2} + P_1 P_2 \abs{\TM 3},
	\label{def:EMM}\\
	E_\NN &:= \abs{\TN 1} + P_1 \abs{\TN 2} + P_1 P_2 \abs{\TN 3},  
	\label{def:ENN}\\
	E_\mm & := Y P_1 P_2 P_3 P_4 = Y P^{\tfrac{12325}{4096}}.
	\label{def:Emm}
\end{align}
Thus to prove the final estimate \eqref{step3:TST}, as well as the intermediate claims \eqref{S:diagonal:infact}, it is sufficient to prove that $\EM,\EN\ll \Em$. 

\subsection{Treatment of the central arc}

Here we estimate the error terms coming from the central arcs of $T^{(1)}$, $T^{(2)}$ and $T^{(3)}$. 
In order to prove that $\EN\ll E_\mm$ it is enough to show, since $Y\leq P^{\tfrac{4992}{4096}}$ by assumption, that $\EN\ll Y^2P^{\tfrac{7333}{16384}}$.

\begin{proposition}\label{TN}
	\begin{align}
		\TN 1 &\lls Y^2 P_1^{-2} P_2^{8} P_3^{-6} P_4^2 &=  Y^2 P^{\tfrac{6602}{4096}+\epsilon},	
		\label{TN1} \\
		P_1 \TN 2 &\lls Y^2 P_1 P_2^{-2} P_3^2 P_4^2 &=  Y^2 P^{\tfrac{7242}{4096}+\epsilon},
		\label{TN2} \\
		P_1 P_2 \TN 3 &\lls Y^2 P_1 P_2 P_3^{-1} P_4 &=  Y^2 P^{\tfrac{6917}{4096}+\epsilon}	.
		\label{TN3}
	\end{align}
\end{proposition}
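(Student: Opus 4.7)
The three bounds will be proved by combining the trivial pointwise estimate \eqref{HT} for $H_j$ on the central arc $\Ni j$ (which, as noted in \cref{rmk:HN1}, suffices in the absence of a $q^{-1/4}$ saving) with a mixture of $L^\infty$ bounds for $g$ and $f_i$ and $L^2$-type integral estimates for $|f_i|^2$, $|\nu_i|^2$, or $|f_i|^2\|\a\|^{-2}$ drawn from Section~\ref{sec:S}.

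For \eqref{TN3}, the strategy is direct. The arc $\Ni 3 = \{\|\a\|\leq P_3 P_4^{-4}\}$ is short enough that we may simply use $|H_3|\ll P_3^{-2}P_4^4$, $|g|^2\leq Y^2$ pointwise, and the $L^2$ estimate \eqref{fi} on $\int_{\Ni 3}|f_4|^2\,d\a$, which gives $\lls P_3 P_4^{-3}$ (the $BX$ term dominating). Multiplying these three factors yields the target $Y^2 P_3^{-1}P_4$.

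For \eqref{TN1}, the plan exploits that $\Ni 1 = \AA 2_1\cap\Mi 1(1,0)$ lies within $\AA 3_0\cap\AA 4_0$ (since $P_1 P_2^{-4}\leq \cb P_3^{-3}$ and $\leq\cb P_4^{-3}$) and also within $\{\|\a\|\leq Y^{-1}\}$ (since $P_1 P_2^{-4}<Y^{-1}$, which follows from $Y\leq P^{4992/4096}$). Hence $f_j=\nu_j+O(1)$ for $j=3,4$ and $|g|^2\leq Y^2$. Combining these with $|\nu_3|^2\ll P_3^{-6}\|\a\|^{-2}$ (valid on $\Ni 1\subseteq\AA 3_1$) and the trivial $|\nu_4|^2\ll P_4^2$, the integral reduces to $Y^2 P_1^{-2}P_2^4 P_3^{-6}P_4^2\int_{\Ni 1}|f_2|^2\|\a\|^{-2}\,d\a$. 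The remaining integral is bounded via \eqref{fB} applied with $j=2$ by $\ll P_2^4\log P_2$, producing exactly the claimed $Y^2 P_1^{-2}P_2^8 P_3^{-6}P_4^2$ up to a $P^\epsilon$ factor.

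The estimate \eqref{TN2} is the most delicate and will require splitting $\Ni 2 = A\cup B$ with $A=\Ni 2\cap\{\|\a\|\leq Y^{-2}\}$ and $B=\Ni 2\setminus A$. On $A$, we have $g=Y+O(1)$, and \eqref{fi} applied with $B=Y^{-2}$ gives $\int_A|f_3|^2\,d\a\lls P_3^{-2}\log P_3$ (using the observation $Y^{-2}P_3\leq P_3^{-2}$, which follows from $Y^2>P_3^3$, itself a consequence of $\gamma>4059/16384$); combined with $|f_4|^2\leq P_4^2$ pointwise this already gives the target on $A$. On $B$, where $|g|^2\leq Y^2$ is still tight (since $B\subseteq\{\|\a\|\leq Y^{-1}\}$), one uses the mollification $f_4=\nu_4+O(1)$ which is valid on $\Ni 2$, together with a dyadic decomposition in $\|\a\|$ and the decay of $|\nu_4|^2$ to bring in an $L^2$ weight $\|\a\|^{-2}$; the crucial step is then the application of \eqref{fB} with $j=3$, which gives $\int_{B\cap\AA 3_1}|f_3|^2\|\a\|^{-2}\,d\a\ll P_3^4\log P_3$, yielding the desired saving on this subarc.

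The main obstacle will lie in \eqref{TN2}, specifically on the portion of $B$ where $\|\a\|>\cb P_3^{-3}$: there we have no mollification approximation for $f_3$ and only the trivial pointwise bound for $H_2$, so the needed saving of $P_2^{-1}P_3$ beyond the crude estimate must come entirely from weighting $|f_3|^2$ against $\|\a\|^{-2}$ via the dyadic refinement of \eqref{fB}, carefully coordinated with the decay of $|\nu_4|^2$ on $\AA 4_1$.
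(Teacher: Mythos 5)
Your treatments of \eqref{TN1} and \eqref{TN3} are essentially the paper's: for \eqref{TN1} you pull out $H_1$, $g$, $f_4$ by their trivial/pointwise bounds, use $f_3\approx\nu_3$ to produce the weight $P_3^{-6}\norm\a^{-2}$, and apply \eqref{fB} with $j=2$; for \eqref{TN3} you pull out $H_3$, $g$ and apply \eqref{fi} over the short arc. (Minor slip: you write ``valid on $\Ni 1\subseteq\AA 3_1$''; in fact $\Ni 1\subseteq\AA 3_0$, which is what you need for $f_3=\nu_3+O(1)$. The decay bound for $\nu_3$ from \eqref{nu1} is global, so nothing is harmed.)

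For \eqref{TN2}, however, your decomposition of $\Ni 2$ is different from the paper's and, as written, leaves a genuine gap. The paper splits $\Ni 2 = \Mi 2(1,0)$ at $\cb P_3^{-3}$, i.e.\ $\Ni 2 = \AA 3_0 \sqcup (\Ni 2\cap\AA 3_1)$, using $|f_4|\ll P_4$ trivially together with $\int_{\AA 3_0}|f_3|^2 d\a\ll P_3^{-2}$ on the first piece, and $|f_4|^2\ll P_4^{-6}\norm\a^{-2}$ together with \eqref{fB} on the second. Your split instead is at $Y^{-2}$; since only the one-sided bound $|g|\leq Y$ from \eqref{gT} is ever used here, splitting at the scale where $g=Y+O(1)$ holds buys nothing, and it creates an intermediate region $B\cap\AA 3_0 = \{Y^{-2}<\norm\a\leq\cb P_3^{-3}\}$ that your write-up does not treat. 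On that region you cannot follow the prescription you give for $B$ (bring in $\norm\a^{-2}$ from the decay of $|\nu_4|^2$, weight $|f_3|^2$ by $\norm\a^{-2}$): applying $|\nu_3|^2\ll P_3^{-6}\norm\a^{-2}$, $|\nu_4|^2\ll P_4^{-6}\norm\a^{-2}$ there and integrating gives a contribution
\begin{equation}
Y^2 P_2^{-2}P_3^4\cdot P_3^{-6}P_4^{-6}\int_{Y^{-2}}^{\cb P_3^{-3}}\norm\a^{-4}\,d\a \;\asymp\; Y^{8}P_2^{-2}P_3^{-2}P_4^{-6},
\end{equation}
and the ratio of this to the target $Y^2P_2^{-2}P_3^2P_4^2$ is $Y^6 P_3^{-4}P_4^{-8}$, which for $\gamma$ near $\gamma_1$ equals $P^{1560/4096}$ and so \emph{exceeds} the target. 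In other words, on $B\cap\AA 3_0$ the $\norm\a^{-2}$ weight from $\nu_4$ is harmful, and you must revert to $|f_4|\ll P_4$ and $\int_{\AA 3_0}|f_3|^2\,d\a\ll P_3^{-2}$ — which is precisely the paper's treatment of $\AA 3_0$, and also what you already do on $A$. The simplest repair is to delete the $Y^{-2}$ split entirely and treat all of $\AA 3_0$ uniformly; this collapses your $A$-argument and the missing $B\cap\AA 3_0$ case into a single step and recovers the paper's proof. (One more small slip: ``decay of $|\nu_4|^2$ on $\AA 4_1$'' should presumably read $\AA 3_1$, since $\Ni 2\subseteq\AA 4_0$.)
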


\begin{proof}
	We have
	\begin{equation}
	\TN 1 \leq \left(\sup_{\a \in\Ni 1}\abs{H_1 f_4^2 g^2}\right) \int_{\Ni 1} \abs{f_2 f_3}^2 d\a.
	\end{equation}
	We also have $\Ni 1 \subseteq \AA 2_1\cap  \AA 3_0$ (see \eqref{BBj} and \eqref{def:N1}) since the inequalities
	\begin{equation}
		\cb P_2^{-3} < \norm \a \leq P_1P_2^{-4} \leq \cb P_3^{-3}
	\end{equation}
	hold for every $\a\in\Ni 1$. In particular $f_3$ is well approximated by $\nu_3$ on $\Ni 1$ and so $f_3(\a)\ll P_3^{-3}\norm\a ^{-1}$ by \eqref{f0} and \eqref{nu1}. 
	Therefore
	\begin{equation}
	\int_{\Ni 1} \abs{f_2 f_3}^2 d\a \ll P_3^{-6} \int_{\AA 2_1} \abs{f_2}^2 \norm{\a}^{-2} d\a
	\end{equation}
	which is $\lls P_2^{4}P_3^{-6}$ by \eqref{fB}.  
	Hence \eqref{TN1} follows using the trivial estimates $H_1\ll P_1^{-2} P_2^4$, $g\ll Y$ and $f_4\ll P_4$. \footnote{We could have saved $P_2^{-1/4}$ by using the more precise estimate \protect\eqref{HN1}, but this is not much actually.}  
	We deal with $\TN 2$ similarly:
	\begin{equation}
	\TN 2 \leq \left(\sup_{\a \in\Ni 2}\abs{H_2 g^2}\right) \int_{\Mi 2 (1,0)} \abs{f_3 f_4}^2 d\a.
	\end{equation} 
	We estimate $H_2$ and $g$ trivially as above. 
	To estimate the integral instead, we observe that $\Mi 2 (1,0) \subseteq \AA 3 _0 \sqcup (\AA 4_0 \setminus \AA 3_0)$. 
	On the interval $\AA 3_0$ we estimate $f_4$ trivially, 
	while on $\AA 4_0 \setminus \AA 3_0$ we proceed as in the previous case, so
	\begin{equation}\label{eq:TN2}
		\TN 2 \ll Y^2 P_2^{-2} P_3^4
		\left( 
		P_4^2\int_{\AA 3_0} \abs{f_3}^2d\a 
		+ 
		P_4^{-6}\int_{\AA 3_1} \abs{f_3}^2 \norm{\a}^{-2} d\a  
		 \right).
	\end{equation}
	Since on $\AA 3_0$ the approximation $f_3=\nu_3+O(1)$ holds, we have $\abs{f_3}^2 = \abs{\nu_3}^2 + O(P_3)$ and so the first integral in \eqref{eq:TN2} is estimated as
	\begin{equation}
		\int_{\AA 3 0} \abs{f_3}^2d\a 
		\ll \int_{\TT} \abs{\nu_3}^2 d\a + P_3\cdot \int_{\AA 3 0} 1d\a,
	\end{equation}
	which is $\ll P_3^{-2}$ by \eqref{vii}. 
	On the other hand the second integral of \eqref{eq:TN2} is $\ll P_3^4 \log P_3$ by and \eqref{fB}, so \eqref{TN2} follows. 
	Finally \eqref{TN3} follows simply from 
	\begin{equation}
	\TN 3 \leq \left(\sup_{\a \in\Ni 3}\abs{H_3 g^2}\right) \int_{\Mi 3 (1,0)} \abs{f_4}^2 d\a,
	\end{equation} 
	estimating $H_3$ and $g$ trivially and using \eqref{fi} with $B=2 P_3P_4^{-4}$ to estimate the integral. 
\end{proof}	

\subsection{Treatment of the major arcs}

Here we estimate the error terms coming from the major arcs in $\Mi j$ (which exclude the central one). 
Since the Weyl sum $g$ is small away from $0$, we are able to estimate it nontrivially on $\Mi j$. 
For example we have the following proposition, that is obtained, \emph{mutatis mutandis}, from  \cite[Lemma 2]{daniel}.

\begin{proposition}
	For all $1\leq j\leq 3$ we have, uniformly on $q>1$:
	\begin{equation} \label{gM}
		\sum_{\gcdaq}
		 \left(\sup_{\a\in\Mi j (q,a)} \abs{g(\a,Y)}^2\right)
		\ll q Y. 
	\end{equation}
\end{proposition}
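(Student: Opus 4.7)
The plan is to combine the pointwise estimates on $g(\a,Y)$ from \cref{lemma:g} with the observation that, for $\gcd(a,q)=1$ and $q \geq 2$, the rational $a/q$ is separated from the integers by at least $1/q$, while the radius of the arc $\Mi j(q,a)$ is much smaller than $1/q$. This will let me replace the supremum over an arc by the value of $g$ at its center, up to a constant.

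First I will observe that $|g(\a,Y)|^2 \leq \min\{Y^2, (4\norm{\a}^2)^{-1}\}$ for every $\a \in \TT$, which follows by combining \eqref{gT} and \eqref{g1} (the explicit constant comes from the standard inequality $|\sin(\pi x)| \geq 2\norm{x}$). Fix now coprime integers $a,q$ with $q\geq 2$ and choose a representative $1\leq a\leq q-1$, so that $\norm{a/q} = \min(a,q-a)/q \geq 1/q$. By \eqref{P:range} the radius of $\Mi j(q,a)$ satisfies $q^{-1}P_j P_{j+1}^{-4} \leq q^{-1} P_j^{-2}$, which is at most $\tfrac12 \norm{a/q}$ for $P$ large enough. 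Therefore $\norm{\a}\geq \tfrac12 \norm{a/q}$ for every $\a\in \Mi j(q,a)$, and
\begin{equation}
\sup_{\a\in \Mi j(q,a)} |g(\a,Y)|^2 \ll \min\!\left\{Y^2,\ \frac{q^2}{\min(a,q-a)^2}\right\}.
\end{equation}

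To finish, I will sum over $a\in(\Z/q\Z)^\ast$. Since the map $a\mapsto \min(a,q-a)$ is at most two-to-one from $\{1,\dots,q-1\}$ onto $\{1,\dots,\lfloor q/2\rfloor\}$, it suffices to bound
\begin{equation}
\Sigma := \sum_{m=1}^{\lfloor q/2\rfloor} \min\{Y^2,\ q^2/m^2\}.
\end{equation}
I split this sum at the threshold $m\asymp q/Y$ where the two branches of the minimum agree. The terms with $m\leq q/Y$ contribute at most $(q/Y)\cdot Y^2 = qY$ (and contribute nothing when $q<Y$). The terms with $m>q/Y$ contribute $\ll q^2\cdot (Y/q) = qY$ by comparison with $\int_{q/Y}^\infty t^{-2}\,dt$ when $q\geq Y$, or by the trivial bound $\sum_{m\geq 1} q^2/m^2 \ll q^2 \leq qY$ when $q\leq Y$. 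Either way $\Sigma \ll qY$, which yields the claim.

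The only technical point is avoiding a spurious logarithmic factor: the crude bound $\min\{Y,q/m\}^2 \leq Yq/m$ would give $\ll qY\log q$. Handling the two regimes $m\lessgtr q/Y$ separately, as above, is what removes the log. The uniformity in $j$ is automatic since only the inequality $P_j P_{j+1}^{-4}\leq P_j^{-2}$ from \eqref{P:range} enters, and this holds for $j=1,2,3$.
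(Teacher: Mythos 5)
Your proof is correct and complete. The paper itself does not prove this proposition; it simply invokes Lemma~2 of Daniel's paper ``\emph{mutatis mutandis}''. Your argument is the self-contained version of the standard estimate underlying that reference: you bound $|g|^2$ by $\min\{Y^2,\norm{\a}^{-2}\}$, note that by \eqref{P:range} the arc $\Mi j(q,a)$ has radius $\le q^{-1}P_j^{-2}\le\tfrac12 q^{-1}\le\tfrac12\norm{a/q}$ for large $P$, reduce to $\sum_{m\le q/2}\min\{Y^2,q^2/m^2\}$, and split at the threshold $m\asymp q/Y$ — the last step being precisely what avoids the spurious $\log q$ that a naive geometric-mean bound would produce. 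The only cosmetic remark is that the explicit constant $1/(4\norm{\a}^2)$ is irrelevant since \eqref{g1} only asserts $g\ll\norm{\a}^{-1}$; and your observation that the uniformity in $j$ rests solely on $P_jP_{j+1}^{-4}\le P_j^{-2}$ is exactly right.
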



This allows us to save one power of $Y$ in the estimate for $\EM$. 
We will need also some estimates for the Weyl sums $f_j$. 
For this purpose the following result, taken from the book of Vaughan \cite{vaughan:book}, is very useful.


\begin{lemma}\label{label:vaughan:major}
	For every coprime $q,a$ and every $\epsilon>0$ we have
	\begin{align}
		f(a/q + \b,X) &\ll q^{-1/4} \nu(\b,X) + q^{1/2+\epsilon}  (1+X^4 \norm\b)^{1/2}
		& &  \text{for all $\b$,}
		\label{fV+} \\
		f(a/q + \b,X) &\ll q^{-1/4} \nu(\b,X) + q^{1/2+\epsilon} 
		& &  \text{if $\norm \b < \frac 1 {8 q X^3}$.}
		\label{fV-}
	\end{align}
\end{lemma}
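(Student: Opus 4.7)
The statement is a standard major arc estimate for biquadratic Weyl sums, and the proof is essentially a direct citation of the corresponding theorem in Vaughan's book \cite{vaughan:book}. The plan is to split the summation range by residues modulo $q$, factor out a complete Gauss sum, approximate the resulting inner sums by an integral via partial summation, and finally relate that integral to $\nu(\beta,X)$.

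More concretely, writing $x = qy+r$ with $1\leq r\leq q$ and using that $(qy+r)^4\equiv r^4\pmod q$, one obtains
\[
f(a/q+\beta,X) = \sum_{r=1}^{q} e(ar^4/q) \sum_{\substack{y\\ X/2 < qy+r\leq X}} e\bigl(\beta (qy+r)^4\bigr).
\]
The outer sum is the complete Gauss sum $S(q,a):=\sum_{r=1}^{q} e(ar^4/q)$, which, by reduction to prime powers via the Chinese remainder theorem and the classical Weyl bound for such sums, satisfies $|S(q,a)|\ll q^{3/4+\epsilon}$ whenever $\gcd(a,q)=1$. This is what produces the factor $q^{-1/4}$ (after absorbing $q^\epsilon$).

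Next I would approximate each inner sum by the integral $q^{-1}\int_{X/2}^{X} e(\beta t^4)\,dt$. The phase $\beta t^4$ has derivative $O(|\beta| X^3)$ on $[X/2, X]$, and the Kusmin--Landau / van der Corput inequality yields an error term of size $O\bigl((1+X^4\|\beta\|)^{1/2}\bigr)$ per residue class (the square-root shape being essential for the sharp bound). Summing over $r$ and combining with $|S(q,a)|\ll q^{3/4+\epsilon}$ yields the leading term $q^{-1}S(q,a)\cdot v(\beta,X)$ with $v(\beta,X):=\int_0^X e(\beta t^4)\,dt$, and the error term $q^{1/2+\epsilon}(1+X^4\|\beta\|)^{1/2}$ claimed in \eqref{fV+}. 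The relation $v(\beta,X)=\nu(\beta,X)+O(1)$ follows from the substitution $z=t^4$, which converts $v(\beta,X)$ to $\int_0^{X^4}\tfrac14 z^{-3/4}e(\beta z)\,dz$; this differs from the Riemann sum defining $\nu(\beta,X)$ by $O(1)$ by standard partial summation, and the $O(1)$ error is harmlessly absorbed in $q^{1/2+\epsilon}$. Finally, \eqref{fV-} is immediate from \eqref{fV+} since $\|\beta\|\leq (8qX^3)^{-1}$ forces $X^4\|\beta\|\leq X/(8q)$, which, together with $q\leq X$ on the relevant major arcs, bounds the factor $(1+X^4\|\beta\|)^{1/2}$ by an absolute constant.

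The main obstacle is the square-root error term in \eqref{fV+}: a crude application of Euler--Maclaurin would only yield a linear dependence on $X^4\|\beta\|$, which is insufficient for the later estimates. Since this sharpening is precisely the content of \cite[Theorem 4.1]{vaughan:book} (or the companion lemma in \cite[\S 4]{vaughan:book}) specialised to $k=4$, the cleanest realisation of the plan is to quote that result directly and verify that our normalisations of $f$ and $\nu$ match the ones used there.
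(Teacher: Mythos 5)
Your outline of how \eqref{fV+} arises is faithful to Vaughan's proof of \cite[Theorem 4.1]{vaughan:book}: split by residue classes modulo $q$, pull out the complete Gauss sum $S(q,a)=\sum_{r=1}^q e(ar^4/q)$, approximate the inner sums by $q^{-1}v(\beta,X)$ with $v(\beta,X)=\int_0^Xe(\beta t^4)\,dt$, and use the bound $S(q,a)\ll q^{3/4}$ (this last is \cite[Lemma~4.3]{vaughan:book}, which gives a clean power without the $\epsilon$, so nothing need be ``absorbed'' into the $q^{-1/4}$). The paper's own proof is simply the citation of \cite[Theorems 4.1 and 4.2]{vaughan:book}, so on this part your proposal is the same route, spelled out.

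However, the final step is wrong: \eqref{fV-} is \emph{not} an immediate consequence of \eqref{fV+}. If $\norm\beta<\tfrac1{8qX^3}$ then $X^4\norm\beta< X/(8q)$, and for this to make $(1+X^4\norm\beta)^{1/2}$ bounded you would need $q\gg X$, not $q\leq X$ as you wrote --- the inequality goes the wrong way. On the major arcs used in this paper one typically has $q\leq P_j\ll X$, so $X/(8q)$ can be as large as a positive power of $X$, and substituting into \eqref{fV+} only gives an error of size $q^{\epsilon}X^{1/2}$, which is much worse than $q^{1/2+\epsilon}$. In fact \eqref{fV-} is precisely \cite[Theorem~4.2]{vaughan:book}, a separate statement whose proof exploits the restricted range of $\beta$ to sharpen the approximation of the inner sums by the integral (the derivative of the phase stays below $1/(2q)$, so the Euler--Maclaurin error per residue class is $O(1)$ rather than $O((1+X^4\norm\beta)^{1/2})$). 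You need to invoke that second theorem on its own; it does not fall out of the unrestricted estimate.
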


\begin{proof}
	The estimates \eqref{fV+} and \eqref{fV-} follow from \cite[Thm 4.1 and Thm 4.2]{vaughan:book}. 
\end{proof}

In our case \cref{label:vaughan:major} is used to estimate the $f_j$ in absolute value and in mean square over the major arcs, as in the following two corollaries.

\begin{corollary}
	For all $1\leq j\leq 3$, all coprime $q,a\leq P_j$ and all $\epsilon >0$ we have
	\begin{align}
		\int_{\Mi j (q,a)} \abs{f_{j+1}}^2 d\a
		&\lls q^{-1} P_j^{1/2} P_{j+1}^{-2}
		\label{fM} 
	\end{align}
\end{corollary}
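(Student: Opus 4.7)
The plan is to parametrize the major arc by $\a = a/q + \b$ with $\abs\b \leq L := q^{-1} P_j P_{j+1}^{-4}$ and then apply Vaughan's major arc approximation \eqref{fV+} to $f_{j+1}$. Note that the alternative estimate \eqref{fV-} is not available uniformly on $\Mi j(q,a)$: its hypothesis requires $\norm \b < 1/(8q P_{j+1}^3)$, and because $P_{j+1}<P_j$ by \eqref{P:range} we have $L \gg (8q P_{j+1}^3)^{-1}$, so \eqref{fV-} fails on the outer part of the arc. Thus \eqref{fV+} is the relevant tool.

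From \eqref{fV+} and the inequality $(A+B)^2\ll A^2+B^2$ we obtain
\[
\abs{f_{j+1}(a/q+\b)}^2 \ll q^{-1/2}\, \nu(\b,P_{j+1})^2 + q^{1+2\epsilon}\bigl(1+P_{j+1}^4\abs\b\bigr).
\]
Integrating in $\b$ over $\abs\b\leq L$ and bounding the first term by $q^{-1/2}\int_\TT \nu(\b,P_{j+1})^2\,d\b \ll q^{-1/2}P_{j+1}^{-2}$ (via \eqref{vii}) and the second by the elementary computation $\int_{-L}^L(1+P_{j+1}^4\abs\b)\,d\b = 2L + P_{j+1}^4 L^2$, we get
\[
\int_{\Mi j(q,a)} \abs{f_{j+1}}^2 d\a
\ll_\epsilon q^{-1/2}P_{j+1}^{-2} + q^{2\epsilon} P_j P_{j+1}^{-4} + q^{-1+2\epsilon} P_j^2 P_{j+1}^{-4}.
\]

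It remains to check that each of these three contributions is majorized (up to $P^\epsilon$) by the target $q^{-1} P_j^{1/2} P_{j+1}^{-2}$. The first term requires only $q\leq P_j$, which is assumed. The second and third reduce to the single inequality $P_j^{3/2}\leq P_{j+1}^2$, which is a direct consequence of the diminishing ranges hypothesis $P_{j+1}\geq P_j^{3/4}$ in \eqref{P:range} (indeed, with $P_{j+1}=P_j^{13/16}$ one even has the slack $P_{j+1}^2/P_j^{3/2}=P_j^{1/8}$ to absorb the $q^{2\epsilon}$ loss).

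The only real obstacle is understanding why \eqref{fV-} is not enough and having to pay the error $q^{1/2+\epsilon}(1+P_{j+1}^4\norm\b)^{1/2}$ of \eqref{fV+} instead. After squaring and integrating, this error produces a contribution quadratic in $L$, and the fact that this quadratic term is still acceptable is precisely what ties the final bound to the diminishing ranges condition \eqref{P:range}; the rest of the argument is routine bookkeeping.
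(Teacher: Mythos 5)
Your proof is correct and follows essentially the same route as the paper: apply \eqref{fV+} on $\Mi j(q,a)$, square and integrate, control the mollified part via \eqref{vii}, and check the remaining powers against \eqref{P:range} using $q\le P_j$. Your remark explaining why \eqref{fV-} fails on the outer part of the arc is a sensible addition the paper leaves implicit, but the substance is identical.
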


\begin{proof}
	By \eqref{fV+} and \eqref{def:Mqa} we have
	\begin{equation}
		\int_{\Mi j (q,a)} \abs{f_{j+1}}^2 d\a 
		\ll 
		q^{-1/2} \int_\TT \abs{\nu_{j+1}}^2 d\a 
		+ 
		q^{\epsilon}P_j\int_{\Mi j (q,a)} 1\,  d\a 
	\end{equation}
	and so \eqref{fM} follows by \eqref{vii},  \eqref{P:range} and $q\leq P_j$.
	%
\end{proof}

\begin{corollary}
	For all $1\leq i,j\leq 4$ with $j\geq i+2$ and all coprime $q,a\leq P_i$ we have
	\begin{equation}
		\label{fM-}
		\sup_{\a\in\Mi i (q,a)} \abs{f_j(\a)} \lls q^{-1/4} P_i^{3/4}.
	\end{equation}
\end{corollary}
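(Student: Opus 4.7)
The plan is to apply \cref{label:vaughan:major}, specifically the sharper estimate \eqref{fV-}, to $f_j$ at points $\a=a/q+\b$ of $\Mi i(q,a)$, where by \eqref{def:Mqa} we have $|\b|\leq q^{-1}P_iP_{i+1}^{-4}$.

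First I would verify the hypothesis $|\b|<(8qP_j^3)^{-1}$ of \eqref{fV-}. The recursion $P_{i+1}=P_i^{13/16}$ yields $P_iP_{i+1}^{-4}=P_i^{-9/4}=P_i^{-576/256}$, whereas for $j\geq i+2$ we have $P_j^{-3}\geq P_{i+2}^{-3}=P_i^{-507/256}$. The strict inequality $-576/256<-507/256$ therefore gives $P_iP_{i+1}^{-4}\leq\tfrac 1 8 P_j^{-3}$ for $P$ sufficiently large, which is exactly what is needed.

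Once \eqref{fV-} is available, the main contribution $q^{-1/4}\nu(\b,P_j)$ can be controlled by the trivial bound \eqref{nuT}: $\nu(\b,P_j)\ll P_j\leq P_{i+2}=P_i^{169/256}<P_i^{3/4}$, producing $q^{-1/4}\nu(\b,P_j)\ll q^{-1/4}P_i^{3/4}$ uniformly on $\Mi i(q,a)$. The error term $q^{1/2+\epsilon}$ is absorbed by writing $q^{1/2+\epsilon}=q^{-1/4}\cdot q^{3/4+\epsilon}\leq q^{-1/4}P_i^{3/4+\epsilon}$, using the assumption $q\leq P_i$.

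No serious obstacle is expected: the argument is a straightforward bookkeeping of exponents generated by the diminishing-ranges relation $P_{j+1}=P_j^{13/16}$. The gap condition $j\geq i+2$ is crucial, and is in fact used twice — once to push $|\b|$ below the threshold required by \eqref{fV-}, and once to obtain the inequality $P_j\leq P_i^{3/4}$ that matches the claimed bound on the nose.
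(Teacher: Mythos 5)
Your proof is correct and follows the same route as the paper's own argument: apply \eqref{fV-} after checking that $\norm{\b}\leq q^{-1}P_iP_{i+1}^{-4}<(8qP_j^3)^{-1}$ for $j\geq i+2$, bound the main term by $\nu(\b,P_j)\ll P_j\leq P_i^{3/4}$, and absorb $q^{1/2+\epsilon}$ using $q\leq P_i$. The only difference is that you spell out the exponent arithmetic explicitly, while the paper leaves it implicit.
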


\begin{proof}
	For $\a\in\Mi i (q,a)$ we may estimate $\abs{f_j(\a)}$ with \eqref{fV-} because the inequality 
	\begin{equation}\label{hpM}
	\frac 1 q P_iP_{i+1}^{-4}< \frac 1 {8q} P_{j}^{-3}
	\end{equation}
	holds for $P$ large enough. 
	Then \eqref{fM-} follows from the trivial estimate $\nu(\beta,P_j)\ll P_j$ and the inequality $P_j\leq P_i^{3/4}$. 
\end{proof}

We are now ready for the last computations. We recall that in order to have $\EM\ll E_\mm$ we need to show that  $\EM\ll Y P^{\tfrac{12325}{4096} }$. 

\begin{proposition}\label{TM}
	\begin{align}
	\TM 1 &\lls Y P_1^{5/4} P_2^{2} &= 
	Y P^{\tfrac{11776}{4096}+\epsilon},
	\label{TM1}
	\\
	P_1 \TM 2 &\lls Y P_1 P_2^{1/4} P_3^{2} &=  
	Y P^{\tfrac{10328}{4096}+\epsilon},
	\label{TM2}
	\\
	P_1 P_2 \TM 3 &\lls Y P_1 P_2 P_3^{-3/4} P_4^2 &=  Y P^{\tfrac{9790}{4096}+\epsilon}. 
	\label{TM3}	
	\end{align}
\end{proposition}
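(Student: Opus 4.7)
The plan is to decompose each integral
\[
\TM j = \sum_{q=2}^{P_j}\sum_{\gcdaq}\int_{\Mi j(q,a)}H_j\abs{g\prod_{i=j+1}^4 f_i}^2\,d\a
\]
and, on each individual arc $\Mi j(q,a)$, to apply a tailored mixture of pointwise and mean-square bounds. Specifically, I would bound $H_j$ by its supremum via \eqref{HM}; bound every $\abs{f_i}^2$ with $i\geq j+2$ by its supremum via \eqref{fM-} (or, when $j=1$, by a sharper two-term form); retain $\abs{f_{j+1}}^2$ under the integral and control it via the $L^2$ bound \eqref{fM}; and finally sum $\sup_{\Mi j(q,a)}\abs{g}^2$ over $a$ using \eqref{gM}, which saves one full power of $Y$ in comparison with the trivial count. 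After these steps one is left with a monomial in $q$ to sum over $2\leq q\leq P_j$, producing the three claimed bounds.

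The cases $j=3$ and $j=2$ are routine. For $\TM 3$ there is no $f_i$ with $i\geq j+2$ at all, so the $q$-sum reduces to $\sum_{q\leq P_3}q^{-1/4}\lls P_3^{3/4}$. For $\TM 2$ one bounds $\abs{f_4}^2$ by its sup via \eqref{fM-} and $\abs{f_3}^2$ by \eqref{fM}, producing $\sum_{q\leq P_2}q^{-3/4}\lls P_2^{1/4}$; straightforward arithmetic then reproduces \eqref{TM3} and \eqref{TM2} after the multiplications by $P_1$ and $P_1P_2$ respectively.

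The main obstacle is the case $j=1$, where both $\abs{f_3}^2$ and $\abs{f_4}^2$ must be extracted at the sup. A uniform application of \eqref{fM-} falls short of the target by exactly a factor of $P_1^{1/4}$: it would yield $YP_1^{3/2}P_2^2$ rather than the claimed $YP_1^{5/4}P_2^2$. The remedy is to revisit \eqref{fV-} directly on $\Mi 1(q,a)$: since $\abs{\b}\leq q^{-1}P_1P_2^{-4}<\cb q^{-1}P_j^{-3}$ for $j\in\{3,4\}$ (this follows from a short numerical check using the exponents in \eqref{eq:P}), the sharper bound $\abs{f_j}^2\lls q^{-1/2}P_j^2 + q^{1+\epsilon}$ is available. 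Expanding $\abs{f_3}^2\abs{f_4}^2$ into four cross terms and estimating each separately, the dominant contribution turns out to be the product $q^{1+\epsilon}\cdot q^{1+\epsilon}$, which after multiplication by the other $q$-dependent factors leaves the sum $\sum_{q\leq P_1}q^{7/4+\epsilon}\lls P_1^{11/4+\epsilon}$; combined with the surviving monomial $P_1^{-3/2}P_2^2 Y$ this reproduces precisely the target $YP_1^{5/4+\epsilon}P_2^2$, while the three remaining cross terms carry strictly smaller powers of $P_1$ and are therefore absorbed.
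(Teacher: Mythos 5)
Your proof is correct, and for $j=2$ and $j=3$ it coincides with the paper's own argument: bound $H_j$ pointwise by \eqref{HM}, sum $\sup\abs g^2$ over $a$ via \eqref{gM}, control the leading $\abs{f_{j+1}}^2$ in $L^2$ over the arc via \eqref{fM}, bound any remaining $\abs{f_i}^2$ at the supremum via \eqref{fM-}, and sum the resulting $q$-monomial.

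For $j=1$ you have put your finger on a genuine issue. The paper's exhibited chain substitutes $\abs{f_3}^2,\abs{f_4}^2\lls q^{-1/2}P_1^{3/2}$ from \eqref{fM-} and then writes $\TM 1\lls\sum_{q\leq P_1}q^{-5/4}\,YP_1^{3/2}P_2^2$. Since the $q$-sum converges, that delivers $YP_1^{3/2}P_2^2=YP^{12800/4096}$, not the claimed $YP_1^{5/4}P_2^2=YP^{11776/4096}$. The weaker bound exceeds $E_\mm = YP^{12325/4096}$ and would not close the argument, so the supremum bound \eqref{fM-} is too lossy here. Your remedy — returning to \eqref{fV-} directly and retaining the two-term form $\abs{f_j}^2\lls q^{-1/2}P_j^2+q^{1+\epsilon}$ for $j\in\{3,4\}$, legitimate since $8P_1P_j^3<P_2^4$ for those $j$ — is exactly what is needed. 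Expanding into four cross-terms and summing in $q$, the $q^{2+2\epsilon}$ term yields $\sum_{q\leq P_1}q^{7/4+\epsilon}\cdot YP_1^{-3/2}P_2^2\lls YP_1^{5/4+\epsilon}P_2^2$, matching \eqref{TM1}, and the other three contributions are indeed smaller.

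One small caveat in your final sentence: the three remaining cross-terms do not merely carry ``smaller powers of $P_1$'' — two of them also carry extra factors $P_3^2$ or $P_4^2$ — so the dominance of the $q^{2+2\epsilon}$ term must be checked on the total $P$-exponent. The check goes through (the competing terms give $P$-exponents $10314$, $11040$, $10026$ over $4096$, all below $11776$), but the justification as phrased is slightly too cavalier.
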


\begin{proof}
	From the definitions we have 
	\begin{equation}
		\TM 1 \leq \sum_{\substack{2\leq q\leq P_1\\a\in(\Z/q\Z)^\ast}} 
		\sup_{\a\in\Mi 1(q,a)} \abs{g}^2 
		\cdot \left( \sup_{\a\in\Mi 1(q,a)} \abs{H_1 f_3^2 f_4^2}\right)
		\int_{\Mi 1 (q,a)} \abs{f_2}^2 d\a.
	\end{equation}
	We apply \eqref{gM} to $g$,  \eqref{HM} to $H_1$ and  \eqref{fM-} to estimate $f_3,f_4$. Together with \eqref{fM} we get
	\begin{equation}
		\TM 1 \lls 
		\sum_{q=2}^{\lfloor P_1\rfloor}
		q Y 
		\cdot
		q^{-1/4} P_1^{-2}P_2^4
		\cdot
		q^{-1/2} P_1^{3/2}
		\cdot
		q^{-1/2} P_1^{3/2}
		\cdot 
		q^{-1} P_1^{1/2} P_2^{-2}
	\end{equation} 
	which gives \eqref{TM1}. 
	Similarly, to estimate $\TM 2$ we apply \eqref{gM} to $g$, \eqref{HM} to $H_2$, \eqref{fM-} 
	 to $f_4$ and \eqref{fM} to $f_3$:
	\begin{equation}
		\TM 2 \lls 
		\sum_{q=2}^{\lfloor P_2\rfloor}
		q Y 
		\cdot
		q^{-1/4} P_2^{-2}P_3^4
		\cdot
		q^{-1/2} P_2^{3/2}
		\cdot 
		q^{-1} P_2^{1/2} P_3^{-2}
	\end{equation}
	that gives \eqref{TM2}. 
	Finally, again by \eqref{gM}, \eqref{HM} and \eqref{fM} 
	we have
	\begin{equation}
		\TM 3 \lls 
		\sum_{q=2}^{\lfloor P_3\rfloor}
		q Y 
		\cdot
		q^{-1/4} P_3^{-2}P_4^4
		\cdot 
		q^{-1} P_3^{1/2} P_4^{-2}
	\end{equation}
	that gives \eqref{TM3}. 
\end{proof}

%
%
%
%
%
%
%
%
%

\bibliographystyle{plain}
\bibliography{biblio_danielgaps4_v2}

\end{document}